
\documentclass[11pt]{amsart}

\usepackage{amsmath,amssymb,amscd,amsfonts,amsthm,verbatim}
\usepackage[all,cmtip]{xy}
\usepackage{paralist}
\usepackage[mathscr]{eucal}
\usepackage{color}
\usepackage{pdfsync}
\usepackage[]{fontenc}
\usepackage{enumerate}
\usepackage{extarrows}

\usepackage[pdftex]{hyperref}
\hypersetup{citecolor=blue,linktocpage}

\usepackage[colorinlistoftodos]{todonotes}


\synctex=-1

\newtheorem{thm}{Theorem}[section]
\newtheorem{lem}[thm]{Lemma}
\newtheorem{prop}[thm]{Proposition}
\newtheorem{cor}[thm]{Corollary}

\newtheorem{assu-nota}[thm]{Assumption--Notation}

\theoremstyle{definition}
\newtheorem{defn}[thm]{Definition}
\newtheorem{nota}[thm]{Notation}
\newtheorem{rem}[thm]{Remark}
\newtheorem{ex}[thm]{Example}




\newcommand{\inv}{^{-1}}

\newcommand{\iso}{\cong}
\newcommand{\into}{\hookrightarrow}
\newcommand\onto{\twoheadrightarrow}

\newcommand{\A}{\mathbb A}

\newcommand{\K}{\mathbb K}

\newcommand{\bL}{\mathbb L}

\newcommand{\pp}{\mathbb P}

\newcommand{\sing}{_{\rm sing}}
\newcommand{\red}{_{\rm red}}

\newcommand{\OO}{\mathcal O}

\newcommand{\cE}{\mathcal E}

\newcommand{\cG}{\mathcal G}
\newcommand{\cL}{\mathcal L}

\newcommand{\cT}{\mathcal T}

\newcommand{\homc}{{\mathcal Hom}}
\newcommand{\ext}{{\mathcal Ext}}
\DeclareMathOperator{\Ann}{Ann}

\DeclareMathOperator{\Pic}{Pic}

\DeclareMathOperator{\Proj}{Proj}
\DeclareMathOperator{\Spec}{Spec}

\DeclareMathOperator{\car}{char}

\DeclareMathOperator{\Id}{Id}

\DeclareMathOperator{\Tr}{Tr}

\DeclareMathOperator{\coker}{coker}

\DeclareMathOperator{\sym}{Sym}
\DeclareMathOperator{\Tor}{Tor}
\DeclareMathOperator{\Qcoh}{Qcoh}
\DeclareMathOperator{\Coh}{Coh}

\newcommand{\wt}{\widetilde}

\numberwithin{equation}{section}

\title{Deformations  of semi-smooth varieties}
\author{Barbara Fantechi, Marco Franciosi and Rita Pardini}

\begin{document}
\maketitle

\begin{abstract} For a singular variety $X$, an essential step to determine its smoothability and study its deformations is the understanding of
 the tangent sheaf and of the sheaf $\mathcal T^1_X:=\ext^1(\Omega_X,\mathcal O_X)$. \\
A variety is semi-smooth if its singularities are \'etale locally the product of a double crossing point ($uv=0$) or a pinch point ($u^2-v^2w=0$) with affine space; equivalently, if it can be obtained by gluing a smooth variety along a smooth divisor via an involution with smooth quotient.\\ 
Our main result is the explicit computation of the tangent sheaf and the sheaf $\mathcal T^1_X$ for a semi-smooth variety $X$ in terms of the gluing data.   

\par
\medskip
\noindent{\em 2010 Mathematics Subject Classification:} primary 14D15, secondary 
14B07,
14B10,
14J17. 

\par
\medskip
\noindent{\em keywords:} semi-smooth, infinitesimal deformations, push-out scheme

\end{abstract}

\setcounter{tocdepth}{1}
\tableofcontents

\section{Introduction}

For $X$ a singular projective variety, it is natural to ask whether it can be smoothed in a flat proper (or projective) family.
 A first necessary condition is the nonvanishing of the space of global sections of the sheaf $\mathcal T^1_X:=\ext^1(\Omega_X,\mathcal O_X)$; 
 in fact, if $H^0(X,\mathcal T^1_X)=0$, then all infinitesimal deformations of $X$ are locally trivial, and in particular preserve the singularities 
 (see \cite{Schlessinger}).

Sufficient conditions are more difficult to obtain, especially if we assume that the singularities are non isolated.  A classical result of Friedman (cf. \cite{Friedman})  shows smoothability for varieties with simple normal crossings under some very special conditions.
Such results has been recently generalized by Felten,  Filip and Ruddat (cf. \cite{ruddat}) in the realm of toroidal local models.

In  \cite{tziolas2010}, Tziolas proves that if we assume that $X$ has lci singularities then a formal smoothing exists, provided that $\mathcal T^1_X$ is generated by global sections and that $H^1(X,\mathcal T^1_X)=H^2(X,T_X)=0$ (in this case the deformations are also unobstructed).

As this result shows, it is important to compute explicitly the sheaves $T_X$ and $\mathcal T^1_X$.
In this paper we do so for semi-smooth varieties, a class of singularities that naturally appear on stable surfaces in the boundary of the moduli of surfaces of general type. 

A surface is semi-smooth if its only singularities are double crossings and pinch points (see e.g. Def.~4.1 in \cite{KSB}); in Definition \ref{def: semi-smooth} we call a variety $X$ {\em semi-smooth} if it is \'etale locally the product of a semi-smooth surface with affine space. 

As a first step we show that a   variety  $X$  is semi-smooth if and only if it is  a push-out scheme 
\begin{equation}\label{eq: diag0}
\begin{CD}
\bar Y @>g>>Y\\
@V {\bar \jmath} VV @VV j V\\
\bar X@>f >>X
\end{CD}
\end{equation}
where  $\bar X, \bar Y$ and  $Y$ are smooth varieties, $\bar Y\to \bar X$ is a codimension  1 closed embedding, and  $\bar Y\to Y$  is a double cover  (see Section \ref{sec: semi-smooth}).
This is consistent with Koll\'ar's philosophy (see \cite{kollar-sings}) of describing slc varieties in terms of their associated lc pairs $(\bar X, \bar Y)$ and gluing involution on the normalization of $\bar Y$.


Our  first main result  is   the explicit computation of the sheaf $T_X$ using the above description 
(see  Theorem \ref{prop: tg}):

\begin{thm} In the above setup: 
\begin{enumerate}
\item there is a natural injective map $\alpha\colon  T_X\to f_*T_{\bar X}$ which is an isomorphism on  the smooth locus of $X$;

\item set $\mathcal G:=\coker \alpha$;  then $\alpha$ induces   an exact sequence
$$0\to (g_*T_{\bar  Y})^{\rm inv}\to g_*T_{\bar  X}|_{\bar  Y}\to \mathcal G\to 0.$$
\end{enumerate}
\end{thm}

\medskip

The double cover $\bar Y\to Y $ is completely determined by an isomorphism $ L^{\otimes 2} \simeq  \OO_Y ( D)$, where $D\subset Y$  is  the  (necessarily smooth) branch divisor  and  $L\in \Pic(Y)$  (see Section \ref{ssec: double-def}).  
The sheaf ${\mathcal T}^1_X$ is an invertible sheaf on the scheme theoretic singular locus of $X$; it is an extension of a line bundle on $Y$ by a line bundle on $D$:
$$
0\to \mathcal T^1_X\otimes \mathcal I_{Y|X\sing} \to \mathcal T^1_X\to \mathcal T^1_X|_Y\to 0. 
$$
Our second main result is the computation of the first and last term in the above exact sequence
 (see Theorem \ref{thm: computing-T1}):

\begin{thm} We have the following isomorphisms of line bundles: 

\begin{enumerate}
 \item  on $Y$, $\cT^1_X|_Y\cong L\otimes (\det g_*(N_{\bar Y|\bar X}\inv ))\inv$;
\item on $\bar Y$, $g^*(\cT^1_X|_Y)\cong   g^*(L^{\otimes 2})\otimes N_{\bar Y|\bar X}\otimes \iota^*N_{\bar Y|\bar X}$;
\item  on  $D$, $ I_{Y|X\sing}\cong   r_* \left( (N_{\bar Y|\bar X}\inv)_{|R} \right)$, where $r\colon R\to D$ is the natural  isomorphism from the ramification to  the branch divisor of $g$.
\end{enumerate}
\end{thm}

These results  are applied in \cite{FFP19b} to prove the smoothability of all semi-smooth singular stable Godeaux surfaces,
classified in \cite{FPR18}; we expect that the techniques developed here will also apply to other classes of varieties with 
hypersurface singularities with smooth normalization.

Our methods combine different approaches, leading us to obtain along the way results of independent interest, and in greater generality than strictly needed here. 

In section 2 we prove that the relative version of the sheaf $\mathcal T^1$ commutes with specialization for flat families of lci varieties, by making its relationship with the cotangent complex explicit. In section 3 we recall basic results on gluing schemes and give the characterization of semi-smooth varieties via gluing. In section 4 we describe explicitly $X$ as a hypersurface in a rank 2 vector bundle over $Y$ when $\bar X$ is the total space of a line bundle on $\bar Y$; using this we compute explicitly $\mathcal T^1_X$. 

In section 5 we prove the two main theorems; the sheaf $T_X$ is computed as pushforward from a sheaf on $\bar X$ by a mix of global constructions and \'etale local computations; we reduce the computation of the sheaf $\mathcal T^1_X$ to the special case in section 4 by deforming to the normal cone the closed embedding of $\bar Y$ in $\bar X$ and applying the specialization result proved in section 2.

\medskip

\subsection*{Acknowledgements}
This article is based upon work supported by the National Science Foundation under
 Grant No. 1440140, while the first and third author were in residence at the Mathematical Sciences Research Institute in Berkeley, California, 
 during the spring semester of 2019.  
 This collaboration  started during the workshops ``Derived Algebraic Geometry and Birational Geometry and Moduli Spaces'' and
  ``Connections for Women: Derived Algebraic Geometry'',  January 2019, MSRI Berkeley. 

This project was partially supported by the projects PRIN  2015EYPTSB$\_$ 010 ``Geometry of Algebraic Varieties" and PRIN 
 2017SSNZAW$\_$004 ``Moduli Theory and Birational Classification"  of Italian MIUR.
 All authors are members of GNSAGA of INDAM.  
 
 We thank Alice Rizzardo for pointing out reference \cite{Riz}. 

\medskip
\hfill\break
{\bf Notation and Conventions.}  
All  schemes are assumed to be Noetherian and such that  $\frac 12\in \OO_X$.
Varieties are equidimensional reduced schemes of finite type over  an algebraically closed field  $\K$ with $\car \K\ne 2$; 
when talking about points of a variety we restrict our attention to closed points.
 If needed, we state additional assumptions at the beginning of sections.\par

 For a  vector bundle $E$  on a  scheme $X$, 
we follow the conventions of \cite{Hartshorne}  and we write  $V_X(E):=\Spec(\sym E)$ and $\pp_X(E):=\Proj(\sym E)$; 
we will drop the subscript $X$ when no confusion is likely to arise. We identify invertible sheaves and Cartier divisors and we use the additive and multiplicative notation interchangeably. Linear equivalence is denoted by $\sim$.

\section{The sheaf $\mathcal T^1$ for lci varieties and flat lci morphisms}

In the first two subsections we summarize known material for the reader's convenience.
In subsection \ref{ssec: glue-ext} we prove a specialization result which is crucial for the results of section \ref{sec: def}. 

\subsection{Definitions and local properties}

\begin{nota} Let $\pi\colon Y\to B$ be a flat morphism: we denote by  $T_{\pi}$ the sheaf $\homc(\Omega_\pi,\mathcal O_Y)$ and by  $\mathcal T^1_\pi$ the sheaf $\ext^1(\Omega_\pi,\mathcal O_Y)$. 
If $B=\Spec \K$ then we write $T_Y$, $\mathcal T^1_Y$ instead of $T_{\pi}$,  $\mathcal T^1_\pi$. \end{nota}

\begin{defn} We  say that a morphism of schemes is  {\em lci} (``locally complete intersection'') if it is  of finite type and it factors locally as a (closed) regular embedding followed by a smooth morphism, both of finite type. \end{defn}

Note that this differs slightly from the use in \cite{Fulton} where lci means that there exists a global such factorization.

\begin{rem} Let $q\colon Z\to W$ be an lci morphism of schemes, and assume that it factors as $s\circ i$ where $i\colon Z\to M$ is a regular closed embedding and $s\colon M\to Z$ is a 
smooth morphism; let $\mathcal I$ be the ideal sheaf of $X$ in $M$. Then there is an exact sequence of coherent sheaves on $X$ $$
i^*\mathcal I\to i^*\Omega_s\to \Omega_q\to 0$$
with $i^*\mathcal I$ and $i^*\Omega_s$ locally free (\cite{Fulton} B.6.1).
Over the locus  in $Z$ where $q$ is smooth, the sequence is also exact on the left; it follows, if this locus is dense 
(e.g. if $q$ is flat and the fibers of $q$ are generically smooth), 
that the sequence is exact on the left, thus providing a locally free resolution of $\Omega_q$. 
In particular, we get an induced exact sequence $$
0\to T_q\to i^*T_s \to \mathcal N_i \to \mathcal T^1_q\to 0,$$
where $\mathcal N_i=(i^*\mathcal I)^\vee$ is the normal bundle of $Z$ in $M$.
\end{rem}
\begin{rem} \label{rem: T1-divisor}
Assume moreover that $i\colon Z\to M$ is a codimension $1$ regular embedding, i.e., $Z$ is an effective Cartier divisor in $M$. 
Then  $\mathcal T^1_q$ is a quotient of the invertible sheaf $\mathcal N_i=i^*\mathcal O_M(Z)$ on $M$, thus it is a line bundle on a uniquely defined closed subscheme of $Z$;
 this  is a natural closed subscheme structure on the locus of points where $q$ is not smooth.\end{rem}

\begin{defn}\label{def: SIng} We say that a flat lci morphism $q\colon Z\to W$ with generically smooth fibers is {\em locally hypersurface} if it locally admits a factorization as in the previous remark.
 Again it follows that $\mathcal T^1_q$ is an invertible sheaf on a closed subscheme, the {\em singular locus} $Z_{q, { \rm sing}}$.
  If $W=\Spec \K$ then  we drop $q$ from the notation.
\end{defn}

\begin{ex}
 If $M=\Spec R$ and $W=\Spec \K$ is affine with local coordinates $x_1,\ldots,x_{n+1}$ and $f\in R$ is an equation for $Z$, then
 the ideal of $Z\sing$,  the associated Jacobian ideal,  is generated by $\partial f/\partial x_1,\ldots, \partial f/\partial x_{n+1}$.

\end{ex}

\subsection{Relationship with the cotangent complex}

In order to study the case of an lci (or hypersurface) morphism $q\colon Z\to W$ which may not admit a factorization as a regular embedding (or effective Cartier divisor) followed by a smooth morphism, it is useful to relate this notion to that of cotangent complex; this will play a key role in the proof of Theorem \ref{thm: restriction}.

 \begin{nota} If $X$ is a scheme, we  denote 
 by $D(X)$ the derived category of sheaves of $\mathcal O_X$-modules. 
 If $\mathcal A$ is a sheaf of  $\mathcal O_X$-modules, we   denote  by $\mathcal A^c$ the complex in  $D(X)$  that has the sheaf $\mathcal A$ in degree zero, and zero in all other degrees. 
\end{nota}
For any morphism of schemes $q\colon Z\to W$, we denote by  $\mathbb L_q\in D(Z)$
its {\em cotangent complex}; it has zero cohomology in every positive degree  and  $h^0(\mathbb L_q)$ is canonically isomorphic to $\Omega_q$
 (see e.g. \cite[\href{https://stacks.math.columbia.edu/tag/08UQ}{Tag 08UQ}]{stacks}). 

\begin{rem}\label{rem: cotg-lci}
Let $q\colon Z\to W$ be an   lci morphism; then the cotangent complex   $\bL_q$  is perfect of tor amplitude in $[-1,0]$.  In fact, this is a local property (\cite[\href{https://stacks.math.columbia.edu/tag/08T1}{Tag 08T1}]{stacks}),  so we may assume all schemes are affine, and it holds in the affine case by Thm.~5.4 and Corollary 6.14 of   \cite{Quillen} (see also  \cite[\href{https://stacks.math.columbia.edu/tag/08SH}{Tag 08SH}]{stacks}).\end{rem}

\begin{rem} In particular, if $q$ admits a global factorization as a regular embedding $i$ with ideal sheaf $\mathcal I$ followed by a smooth morphism $s$, then there is a canonical isomorphism in $D(Z)$ between $\mathbb L_q$ and the complex of locally free sheaves $[i^*\mathcal I\to i^*\Omega_s]$ in degree $[-1,0]$.
\end{rem}

The cohomology sheaf  $h^{-1}(\bL_q)$ is locally a subsheaf of a free sheaf, hence torsion free. If in addition $q$ is flat and  the fibers of $q$ are generically smooth, the locus in $Z$  where $q$ is smooth is dense, but on the smooth locus $h^{-1}(\bL_q)=0$. So  the only nonzero cohomology sheaf of $\bL_q$  is $h^0(\bL_q)=\Omega_q$, hence $\bL_q$ is canonically isomorphic to $(\Omega_q)^c$ in $D(Z)$.

Recall that given $E$ and $F$  quasicoherent sheaves on a scheme $Z$, for every $i\ge 0$  there is a natural isomorphism in $\Qcoh(Z)$ $$
h^iR\homc(E^c,F^c)\cong \ext^i(E,F).$$

\begin{cor} If $q\colon Z\to W$ is a flat lci morphism with generically smooth fibers, then  $\mathcal T^1_q$ is canonically isomorphic to $h^1R\homc(\bL_q, \OO_Z^c)$. 
\end{cor}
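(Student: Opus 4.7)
The plan is to simply chain together the two observations made in the paragraph immediately preceding the corollary; the statement is essentially immediate from them.

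First I would invoke the preceding discussion to identify $\bL_q$ with $(\Omega_q)^c$ in $D(Z)$. Recall that for any lci morphism the cotangent complex is perfect of tor amplitude in $[-1,0]$ (Remark \ref{rem: cotg-lci}), so only $h^{-1}(\bL_q)$ and $h^0(\bL_q)$ can be nonzero, and $h^0(\bL_q) \cong \Omega_q$ canonically. Since $\bL_q$ is locally represented by a two-term complex of locally free sheaves, $h^{-1}(\bL_q)$ embeds locally into a free sheaf, hence is torsion free. The flatness of $q$ together with the generic smoothness of its fibers ensures that the smooth locus of $q$ is dense in $Z$; there $h^{-1}(\bL_q)$ vanishes, and by the torsion-freeness it must vanish on all of $Z$. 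Hence the natural map $\bL_q \to (\Omega_q)^c$ (truncation in degree $0$) is a quasi-isomorphism.

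Next I would apply the natural isomorphism
\[
h^i R\homc(E^c, F^c) \cong \ext^i(E,F)
\]
displayed just above the corollary, taking $E = \Omega_q$, $F = \OO_Z$, and $i=1$; by definition of $\mathcal T^1_q$ this yields
\[
h^1 R\homc\bigl((\Omega_q)^c, \OO_Z^c\bigr) \cong \ext^1(\Omega_q, \OO_Z) = \mathcal T^1_q.
\]
Substituting the canonical isomorphism $\bL_q \cong (\Omega_q)^c$ from the previous step gives the desired identification $\mathcal T^1_q \cong h^1 R\homc(\bL_q, \OO_Z^c)$.

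The main potential subtlety, rather than an actual obstacle, is verifying that the composed isomorphism is canonical: one should check that the identification $h^0(\bL_q) \cong \Omega_q$ used to build the quasi-isomorphism $\bL_q \cong (\Omega_q)^c$ is the standard one provided by the construction of the cotangent complex (Stacks \href{https://stacks.math.columbia.edu/tag/08UQ}{Tag 08UQ}), so that the resulting isomorphism on $\ext^1$ is compatible with the standard identifications. Once this is noted, no further computation is required.
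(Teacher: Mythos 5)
Your argument is exactly the one the paper intends: the corollary is stated as an immediate consequence of the two observations in the preceding paragraphs (the vanishing of $h^{-1}(\bL_q)$ giving $\bL_q\cong(\Omega_q)^c$, and the identification $h^iR\homc(E^c,F^c)\cong\ext^i(E,F)$), and you chain them together in the same way. The additional remark about checking that the identification $h^0(\bL_q)\cong\Omega_q$ is the standard one is a reasonable precision but does not change the argument.
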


 \subsection{Base change for  ${\mathcal T}^1$}\label{ssec: glue-ext}

A key step for the computations of \S \ref{ssec: T1} will be the fact that under suitable assumptions  the sheaf $\mathcal T^1$ is stable under base change by a closed embedding.

 \begin{thm}\label{thm: restriction} Let $\pi\colon Y\to B$ be a flat lci morphism of schemes (of finite type over $\K$) with generically smooth (e.g.,  reduced) fibers, $g\colon  C\to B$ a closed embedding of schemes; let $X:=Y\times_{B}C$ and denote by $f\colon X\to Y$ and $p\colon X\to C$ the projection maps, yielding the following cartesian diagram: \[
\begin{CD}
X @>{f}>>{Y}\\
@V{p}VV @VV{\pi}V\\
C@>>{g}> B.
\end{CD}
\]
Then there is a natural isomorphism $$f^*({\mathcal T}^1_{\pi})\cong {\mathcal T}^1_p \text{ in } \Coh(X).$$
 \end{thm}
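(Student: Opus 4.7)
The plan is to express $\mathcal{T}^1$ on both sides via the cotangent complex and apply Tor-independent base change. First I will verify that $p\colon X \to C$ satisfies the same hypotheses as $\pi$: it is flat (flatness descends under arbitrary base change), its fibres coincide with the corresponding fibres of $\pi$ and are therefore generically smooth, and it is lci. For the last property I will invoke the characterisation \cite[\href{https://stacks.math.columbia.edu/tag/08SH}{Tag 08SH}]{stacks} that a finite type morphism is lci precisely when its cotangent complex is perfect of tor-amplitude in $[-1, 0]$, so it suffices to show this for $\bL_p$, which I will obtain from $\bL_\pi$ by base change.

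The central step is the base change formula for the cotangent complex: since $\pi$ is flat, the cartesian square is Tor-independent and there is a canonical isomorphism $Lf^*\bL_\pi \cong \bL_p$ in $D(X)$. Combined with Remark \ref{rem: cotg-lci}, this gives $\bL_p$ perfect of tor-amplitude in $[-1,0]$, confirming that $p$ is lci. Hence $p$ is a flat lci morphism with generically smooth fibres, and the corollary at the end of \S 2.2 yields
$$\mathcal{T}^1_p \cong h^1 R\homc(\bL_p, \OO_X^c) \cong h^1 R\homc(Lf^*\bL_\pi, \OO_X^c).$$

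To finish I will compare the right-hand side with $f^*\mathcal{T}^1_\pi$ locally on $Y$. Using a local factorisation of $\pi$ as a regular embedding followed by a smooth morphism, $\bL_\pi$ is represented by a two-term complex of vector bundles $[E^{-1} \to E^0]$ in degrees $[-1, 0]$, and $\mathcal{T}^1_\pi$ equals the cokernel of the dual map $(E^0)^\vee \to (E^{-1})^\vee$. Since the $E^i$ are locally free, $Lf^*\bL_\pi$ is represented by $[f^*E^{-1} \to f^*E^0]$, so $\mathcal{T}^1_p$ equals the cokernel of $(f^*E^0)^\vee \to (f^*E^{-1})^\vee$. Right-exactness of $f^*$ together with its compatibility with dualisation of vector bundles then gives $f^*\mathcal{T}^1_\pi \cong \mathcal{T}^1_p$ locally, and the canonicity of all constructions ensures that these local isomorphisms glue to a natural isomorphism in $\Coh(X)$. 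The main obstacle is the base change isomorphism $Lf^*\bL_\pi \cong \bL_p$: this is where flatness of $\pi$ is used decisively, and is precisely why the cotangent complex machinery from subsection 2.2 is required rather than a direct argument with K\"ahler differentials alone.
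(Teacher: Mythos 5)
Your proposal follows essentially the same route as the paper: base change for the cotangent complex ($Lf^*\bL_\pi\cong\bL_p$, valid since $\pi$ is flat), the identification $\mathcal T^1_q\cong h^1R\homc(\bL_q,\OO^c)$ for flat lci morphisms with generically smooth fibres, and a local two-term locally free resolution that is pulled back along $f$ and dualised. The verification that $p$ is again flat and lci with generically smooth fibres, and the local cokernel computation, match the paper's argument.

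The one place where your write-up has a real soft spot is the closing claim that ``the canonicity of all constructions ensures that these local isomorphisms glue.'' The local isomorphism $f^*\mathcal T^1_\pi\cong\mathcal T^1_p$ is produced from a \emph{choice} of factorisation/resolution of $\Omega_\pi$, so gluing requires either checking independence of that choice or --- as the paper does, and announces as its strategy --- first constructing a single global natural morphism $f^*\mathcal T^1_\pi\to\mathcal T^1_p$ and only then verifying locally that it is an isomorphism. The paper obtains the global map by adjunction from a morphism $\mathcal T^1_\pi\to f_*\mathcal T^1_p$, built out of the natural isomorphism $Rf_*R\homc(Lf^*E,F)\cong R\homc(E,Rf_*F)$ of Lemma \ref{lem:proj_form} applied to $E=\bL_\pi$ and $F=\OO_X^c$, together with exactness of $f_*$ for the closed embedding $f$ and the structure map $\OO_Y\to f_*\OO_X$. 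To close the gap you should either reproduce such a global construction, or argue globally that $R\homc(\bL_\pi,\OO_Y^c)$ is the dual perfect complex of $\bL_\pi$, that dualisation of perfect complexes commutes with $Lf^*$, and that $h^1(Lf^*K)\cong f^*h^1(K)$ when $K$ has amplitude $[0,1]$, or else prove directly that your local isomorphisms do not depend on the chosen resolution. With that repair the proof is complete.
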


The strategy of the proof is to construct the claimed isomorphism globally, then prove that it is an isomorphism locally. In order to do this we make use of the properties of the cotangent complex. 

 \begin{rem}\label{rem: base-change-lci} 
Consider  a Cartesian diagram 
\[
\begin{CD}
X @>{f}>>{Y}\\
@V{p}VV @VV{\pi}V\\
C@>>{g}> B
\end{CD}
\]
If $\pi$ is a flat morphism, then there is a natural isomorphism $Lf^*\bL_\pi\to \bL_p$ in $D(X)$ by \cite[Tag 09DJ]{stacks}. Moreover  if in addition  $\pi$ is  lci,  then $p$ is also an lci morphism by \cite{Fulton} Proposition 6.5(a).
\end{rem}

\begin{lem}\label{lem:proj_form} Let $f\colon X\to Y$ be a morphism of schemes, $E\in D(Y)$ and $F\in D(X)$. Then there is a natural, functorial isomorphism in $D(Y)$
$$Rf_*(R\homc(Lf^*E,F))\cong R\homc(E,Rf_*F)$$
which commutes with restriction to open subsets.
\end{lem}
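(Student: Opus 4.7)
The lemma is the derived incarnation of the classical adjunction $f_*\homc(f^*E,F)\cong\homc(E,f_*F)$ coming from $f^*\dashv f_*$. The plan is to construct the natural map globally out of derived adjunction machinery, then verify it is an isomorphism by Yoneda, reducing the entire statement to a chain of well-known derived adjunctions.

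First I would construct the morphism. Starting from the counit $\varepsilon\colon Lf^*Rf_* \to \Id$ of the derived adjunction $Lf^*\dashv Rf_*$ and the natural pullback map on internal Hom $Lf^*R\homc(E,G)\to R\homc(Lf^*E,Lf^*G)$, I would set $G:=Rf_*F$ and compose with $\varepsilon_F$ to obtain
\[
Lf^*R\homc(E,Rf_*F)\;\longrightarrow\;R\homc(Lf^*E,Lf^*Rf_*F)\;\xrightarrow{\;\varepsilon_{F,*}\;}\;R\homc(Lf^*E,F).
\]
Transposing this across the derived adjunction $Lf^*\dashv Rf_*$ produces the natural morphism in $D(Y)$
\[
R\homc(E,Rf_*F)\;\longrightarrow\;Rf_*R\homc(Lf^*E,F).
\]
All constructions used are natural in $E$ and $F$ and functorial, so this defines the desired morphism.

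To show the morphism is an isomorphism, I would apply the Yoneda lemma in $D(Y)$: it suffices to exhibit, for every $G\in D(Y)$, a natural bijection between $\Hom_{D(Y)}(G,Rf_*R\homc(Lf^*E,F))$ and $\Hom_{D(Y)}(G,R\homc(E,Rf_*F))$ compatible with the constructed map. Running through the standard chain of identifications,
\[
\Hom_{D(Y)}(G,Rf_*R\homc(Lf^*E,F))\;\cong\;\Hom_{D(X)}(Lf^*G,R\homc(Lf^*E,F))
\]
by derived adjunction, then derived tensor-Hom on $X$ gives
\[
\cong\;\Hom_{D(X)}(Lf^*G\otimes^L Lf^*E,F)\;\cong\;\Hom_{D(X)}(Lf^*(G\otimes^L E),F),
\]
using that $Lf^*$ commutes with derived tensor product. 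One more application of derived adjunction and of derived tensor-Hom on $Y$ yields
\[
\cong\;\Hom_{D(Y)}(G\otimes^L E,Rf_*F)\;\cong\;\Hom_{D(Y)}(G,R\homc(E,Rf_*F)),
\]
as required. Each identification is natural in $G$, and tracing the unit/counit maps shows the composite bijection is induced by the morphism constructed above.

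Finally, compatibility with restriction to opens $U\subset Y$ is automatic: every functor used ($Lf^*$, $Rf_*$, $R\homc$, $\otimes^L$) commutes with restriction to open subschemes and flat base change along open immersions, and the adjunctions restrict accordingly. The main technical obstacle is ensuring these derived functors and adjunctions are well-defined at the level of the unbounded derived category; this is handled by invoking the existence of K-flat and K-injective resolutions, for which I would cite the standard references (e.g.\ the Stacks Project, where derived adjunction, the projection formula for $Lf^*$ on tensor products, and the pullback map on internal Hom are all established in this generality). Since everything beyond that is formal adjunction manipulation, no further obstacles arise.
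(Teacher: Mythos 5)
Your proposal is correct, and it is essentially the argument behind the paper's proof: the paper does not prove the lemma itself but simply cites Lemma 2.1 of \cite{Riz} ``and references therein,'' and the standard proof found there is exactly your chain of derived adjunctions ($Lf^*\dashv Rf_*$, derived tensor--Hom, monoidality of $Lf^*$) combined with Yoneda, with the only nonformal inputs being the existence of K-flat and K-injective resolutions in the unbounded derived category. The one step you rightly flag but do not carry out --- checking that the Yoneda bijection is induced by the map built from the counit and the pullback on internal Hom --- is the usual unit/counit diagram chase and poses no obstacle.
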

\begin{proof} See for instance Lemma 2.1 in \cite{Riz} and references therein. 
\end{proof}

\begin{prop}\label{prop:MorExt} In the assumptions of Theorem \ref{thm: restriction}, there is a natural homomorphism
$$f^*({\mathcal T}^1_\pi)\to {\mathcal T}^1_p.$$
\end{prop}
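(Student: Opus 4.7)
The plan is to construct the map at the level of derived categories, then take the first cohomology, invoking the identification $\mathcal T^1_q \cong h^1 R\homc(\bL_q, \OO_Z^c)$ established in the corollary above.

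First I would verify that $p$ inherits the hypotheses imposed on $\pi$: it is flat as a base change of a flat morphism, it is lci by Remark~\ref{rem: base-change-lci}, and its fibers coincide with those of $\pi$ over points of $g(C)\subset B$, hence are generically smooth. The above corollary then identifies $\mathcal T^1_\pi \cong h^1 R\homc(\bL_\pi, \OO_Y^c)$ and $\mathcal T^1_p\cong h^1 R\homc(\bL_p, \OO_X^c)$, while Remark~\ref{rem: base-change-lci} supplies the isomorphism $Lf^*\bL_\pi\cong \bL_p$ in $D(X)$.

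Next, I would construct a canonical morphism in $D(X)$,
\[ Lf^* R\homc(\bL_\pi,\OO_Y^c) \longrightarrow R\homc(\bL_p,\OO_X^c), \]
as follows. The surjection $\OO_Y\twoheadrightarrow f_*\OO_X=Rf_*\OO_X^c$ (using that $f$ is a closed embedding) induces a morphism $R\homc(\bL_\pi,\OO_Y^c)\to R\homc(\bL_\pi, Rf_*\OO_X^c)$ in $D(Y)$. Lemma~\ref{lem:proj_form} applied with $E=\bL_\pi$ and $F=\OO_X^c$ identifies the target with $Rf_* R\homc(Lf^*\bL_\pi,\OO_X^c)\cong Rf_* R\homc(\bL_p,\OO_X^c)$. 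The $(Lf^*,Rf_*)$-adjunction then produces the desired morphism in $D(X)$.

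Finally, I would take $h^1$. By Remark~\ref{rem: cotg-lci}, $\bL_\pi$ is perfect of tor amplitude $[-1,0]$, so $R\homc(\bL_\pi,\OO_Y^c)$ is locally quasi-isomorphic to a two-term complex $[A^0\to A^1]$ of locally free sheaves concentrated in degrees $[0,1]$. Hence $Lf^* R\homc(\bL_\pi,\OO_Y^c)$ is locally represented by the termwise pullback $[f^*A^0\to f^*A^1]$, yielding
\[ h^1\bigl(Lf^* R\homc(\bL_\pi,\OO_Y^c)\bigr) \cong f^* h^1\bigl(R\homc(\bL_\pi,\OO_Y^c)\bigr) \cong f^*\mathcal T^1_\pi. \]
Applying $h^1$ to the derived morphism constructed above then gives the desired natural homomorphism $f^*\mathcal T^1_\pi\to \mathcal T^1_p$.

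The main obstacle, such as it is, will be the bookkeeping needed to check that the local termwise-pullback identification in the last step is independent of the chosen perfect resolution of $\bL_\pi$ and glues into a global isomorphism. This is automatic from the fact that perfect complexes admit a well-defined derived pullback, but it must be spelled out to guarantee that the resulting map is natural in the cartesian square. The much more substantive claim that this homomorphism is an isomorphism, which is the content of Theorem~\ref{thm: restriction}, is a separate matter to be addressed afterwards.
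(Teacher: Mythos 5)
Your proposal is correct and follows essentially the same route as the paper: both rest on Lemma~\ref{lem:proj_form} with $E=\bL_\pi$, $F=\OO_X^c$, the base-change isomorphism $Lf^*\bL_\pi\cong\bL_p$, and the structure morphism $\OO_Y\to f_*\OO_X$. The only (cosmetic) difference is that you apply the derived adjunction $(Lf^*,Rf_*)$ before taking $h^1$, which forces the extra check that $h^1$ commutes with $Lf^*$ on the perfect complex $R\homc(\bL_\pi,\OO_Y^c)$, whereas the paper takes $h^1$ on $Y$ first and then uses the underived adjunction $f^*\dashv f_*$ at the sheaf level, avoiding that step.
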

\begin{proof} 
We will construct a natural homomorphism ${\mathcal T}^1_\pi\to f_*({\mathcal T}^1_p)$; since  $f^*$ is left adjoint to $f_*$, this will  prove the claim.
\smallskip

\noindent Lemma \ref{lem:proj_form} applied to $E=\bL_\pi$ and $F=\OO^c_X$ yields a natural isomorphism 
$Rf_*R\homc(Lf^*\bL_\pi, \OO^c_X)\cong R\homc(\bL_\pi,Rf_*(\OO^c_X))$ 
in the derived category $D(Y)$. 

Since $f$ is a closed embedding, $f_*$ is exact, hence $Rf_*(\OO_X^c)=(f_*\OO_X)^c$; so by Remark \ref{rem: base-change-lci} we obtain a  a natural isomorphism $$Rf_*R\homc(\bL_p, \OO_X^c)\cong R\homc(\bL_\pi,(f_*\OO_X)^c).$$
This induces isomorphisms on cohomology sheaves: in particular, $$
h^1(Rf_*R\homc(\bL_p, \OO^c_X))\cong h^1(R\homc(\bL_\pi,(f_*\OO_X)^c).$$
Since $f_*$ is exact because $f$ is a closed embedding, $$h^1(Rf_*R\homc(\bL_p, \OO^c_X))=f_*(h^1(R\homc(\bL_p, \OO_X^c))).$$
By Remark \ref{rem: cotg-lci}  we obtain 
$$h^1(Rf_*R\homc(\bL_p, \OO^c_X))\cong f_*({\mathcal T}^1_p).$$
Similarly, we have 
$$h^1R\homc(\bL_\pi,Rf_*(\OO^c_X)) \cong h^1R\homc(\bL_\pi,(f_*\OO_X)^c)\cong \ext^1(\Omega_\pi,f_*\OO_X).$$
Putting everything together we get a natural isomorphism of sheaves $$
\ext^1(\Omega_\pi,f_*\OO_X)\cong f_*({\mathcal T}^1_p).$$
It is now enough to compose  this isomorphism with the homomorphism ${\mathcal T}^1_{\pi} \to   \ext^1(\Omega_\pi,f_*\OO_X)$ induced by the structure morphism $\OO_Y\to f_*\OO_X$ to obtain the desired  morphism $$
{\mathcal T}^1_{\pi} \to f_*({\mathcal T}^1_p) \in \Qcoh(Y).$$
\end{proof}

\hfill\break 
{\em Proof of Theorem~\ref{thm: restriction}}.
We need to show that the morphism
 $$
f^*{\mathcal T}^1_{\pi}\to ({\mathcal T}^1_p)$$ defined in Proposition \ref{prop:MorExt} is an isomorphism. 
This is a local property, so we may assume that $Y$, $B$ and $C$ (and hence $X$) are affine.
 Consider  a locally free,  finite rank resolution in $D(X)$ of $\Omega_{\pi}$, which exists in view of  Remark \ref{rem: cotg-lci}  because $Y$ is affine:
 \[
\begin{CD} 0 @>>> \cE^{-1} @>{\varphi}>> \cE^0 @>>> \Omega_{\pi}  @>>> 0 
\end{CD}
\]
This implies that $\bL_{\pi}\iso [ \cE^{-1}\to \cE^0 ]$ in $D(Y)$. By Remark \ref{rem: base-change-lci} it follows that $Lf^*[ \cE^{-1}\to \cE^0 ]\iso \bL_p$ in $D(X)$; since $\cE^i$ are locally free, they are flat over $B$, thus $Lf^*[ \cE^{-1}\to \cE^0 ]=[ f^*\cE^{-1}\to f^*\cE^0 ]$. Hence, again by  Remark \ref{rem: base-change-lci} we have an exact sequence $$
\begin{CD} 0 @>>>f^* \cE^{-1} @>{f^*\varphi}>> f^*\cE^0 @>>> \Omega_{p}  @>>> 0. 
\end{CD}
$$

Dualizing the above sequences (on $Y$ and $X$, respectively) we get 
 \[
 \begin{CD}
 (\cE^{0})^{\vee} @>\varphi^{\vee}>>  (\cE^{-1})^{\vee}  @>>>{\mathcal T}^1_{\pi} @>>> 0 
\end{CD}
\]
 \[\begin{CD}
 f^*(\cE^{0})^{\vee} @>(f^*\varphi)^{\vee}>>  f^*(\cE^{-1})^{\vee}  @>>> {\mathcal T}^1_p  @>>> 0 
\end{CD}
\]

Applying $f^*$ to the first sequence and comparing yields the isomorphism $f^*({\mathcal T}^1_{\pi} )\to {\mathcal T}^1_p$. This completes the proof.

\begin{cor}\label{cor: sing_pullback}  In the assumption of Theorem 
\ref{thm: restriction}, if $\pi$ is of hypersurface type, then so is $p$ and the scheme theoretic intersection of $X$ with $Y_{\pi, {\rm sing}}$ is $X_{p, {\rm sing}}$.
\end{cor}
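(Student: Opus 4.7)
The plan is to dispatch the two claims separately. For the first, that $p$ is locally hypersurface, I work locally around a point $x\in X$ and choose a factorization $Y\hookrightarrow M\xrightarrow{s} B$ of $\pi$ near $f(x)$ with $s$ smooth and $Y$ cut out by a single equation $h\in\Gamma(M,\OO_M)$. Base change yields $X\hookrightarrow M_C\xrightarrow{s_C} C$ where $M_C:=M\times_B C$; smoothness of $s_C$ is preserved under base change. The key point is that $X$ remains a Cartier divisor in $M_C$: since $\pi$ is flat, the $\OO_M$-module $\OO_Y$ is $\OO_B$-flat, so tensoring the short exact sequence
$$0\to\OO_M\xrightarrow{\,h\,}\OO_M\to\OO_Y\to 0$$
over $\OO_B$ with $\OO_C$ remains exact. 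Hence the pullback of $h$ is a non-zero-divisor cutting out $X$, and $p$ is locally hypersurface.

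For the second claim, Theorem~\ref{thm: restriction} gives a canonical isomorphism $f^*{\mathcal T}^1_\pi\iso {\mathcal T}^1_p$. By Remark~\ref{rem: T1-divisor}, ${\mathcal T}^1_\pi$ is the pushforward to $Y$ of a line bundle on $Y_{\pi,{\rm sing}}$, and ${\mathcal T}^1_p$ is the pushforward to $X$ of a line bundle on $X_{p,{\rm sing}}$. Base change for closed immersions (which holds for arbitrary quasi-coherent sheaves and requires no flatness) identifies $f^*{\mathcal T}^1_\pi$ with the pushforward to $X$ of a line bundle on the scheme-theoretic intersection $X\cap Y_{\pi,{\rm sing}}:=X\times_Y Y_{\pi,{\rm sing}}$. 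Now the annihilator in $\OO_X$ of the pushforward of any line bundle from a closed subscheme of $X$ is exactly the defining ideal of that subscheme, so the isomorphism forces $X\cap Y_{\pi,{\rm sing}}$ and $X_{p,{\rm sing}}$ to have the same ideal sheaf in $\OO_X$, hence to coincide as closed subschemes.

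The only real content lies in the first step, where flatness of $\pi$ is essential to ensure that the local equation of $Y$ remains a non-zero-divisor after base change. Once that is in place, the identification of scheme structures in the second step is essentially formal given Theorem~\ref{thm: restriction} and the compatibility of pullback with pushforward along closed immersions.
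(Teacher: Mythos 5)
Your argument is correct and follows the same route as the paper's (much terser) proof: flatness of $\pi$ preserves the local Cartier-divisor presentation under base change, and the scheme-theoretic identification of singular loci is read off from the isomorphism $f^*\mathcal T^1_\pi\cong\mathcal T^1_p$ of Theorem \ref{thm: restriction} together with Definition \ref{def: SIng}, via the annihilator of the pushed-forward line bundle. You have simply supplied the Tor-vanishing and base-change-for-closed-immersions details that the paper leaves implicit.
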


\begin{proof} The map $p$ is of hypersurface type since the map $\pi$  is of hypersurface type and flat. 
The result on the singular locus follows from Theorem 
\ref{thm: restriction} and Definition \ref{def: SIng}.
\end{proof}

\section{Gluing a scheme along a closed subscheme}\label{sec: glue}

In this section we   
study in detail  the properties of the scheme $X=\bar X\sqcup_{\bar Y}Y$ 
 obtained  by gluing  a scheme  $\bar X$ via a finite morphism  $g\colon \bar Y\to Y$, where $\bar Y \subset \bar X$ is a closed subscheme.

\subsection{Generalities}\label{ssec: generalities}

In this subsection we recall briefly what it means to glue (pinch) a scheme along a closed subscheme. We  mainly follow \cite{Ferrand}, that works in the category of schemes; more general situations, leading to the construction of algebraic spaces, are considered for instance  in \cite{kollar-quotients}, \cite[Ch.~5 and 9]{kollar-sings}. 
\bigskip

Fix a  Noetherian scheme $S$; in this section all schemes are $S$-schemes and maps are maps of $S$-schemes.   We recall from \cite{Ferrand} the following:
\begin{defn}\label{def: AF}
We say that a scheme $X$ satisfies property $(AF)$ (or Chevalley-Kleiman property, cf.~\cite{kollar-quotients}) if every finite subset of $X$ is contained in an affine open  set. Note that a scheme satisfying $(AF)$ is separated,  since $X\times X$ can be covered by  open sets of the form $U\times U$ with $U\subseteq X$  open affine.
\end{defn}

\begin{rem}\label{rem: AF}
 Quasi-projective varieties over an algebraically closed field satisfy property $(AF)$. Without loss of generality we may assume  that $X=Y\setminus Z\subseteq \A^N$, where $Y$ and $Z\subset Y$ and  are closed. 
Let   $p_1, \dots p_k\in X$ be distinct points. Clearly  it is enough to prove the claim when the $p_i$ are closed points. Then for every $i$ we can  find $\phi_i\in I(Z)\subseteq \K[\A^N]$ such that $\phi_i(p_i)=1$ and $\phi_i(p_j)=0$ if $i\ne j$. If we set $\phi:=\phi_1+\dots+\phi_k$ with $a_i\in \K$ general,  then $p_1,\dots p_k$ are contained in the affine open set $Y_{\phi}\subset X$.
\end{rem} 

We consider the following setup: $\bar X$ and $Y$ are schemes satisfying (AF) and  we are given a scheme $\bar Y$ with a closed embedding  $\bar \jmath \colon \bar Y\to \bar X$   and  a finite morphism $g\colon \bar Y\to Y$. Then we have the following:

\begin{thm}[Ferrand, Thm.~5.4 of \cite{Ferrand}]\label{thm: glue}
If $\bar X$ and $Y$ satisfy property $(AF)$, then there exists a scheme $X$ also satisfying $(AF)$  fitting in the  following cocartesian diagram: 
\begin{equation}\label{eq: diag1}
\begin{CD}
\bar Y @>g>>Y\\
@V {\bar \jmath} VV @VV j V\\
\bar X@>f >>X
\end{CD}
\end{equation}
such that:
\begin{itemize}
\item[(a)] diagram \eqref{eq: diag1} is cartesian 
\item[(b)] $f$ is finite  and $j$ is a closed embedding
\item[(c)] $f$ restricts to an isomorphism $\bar X\setminus \bar Y\to X\setminus Y$.
\end{itemize}
\end{thm}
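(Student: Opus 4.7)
The plan is to follow Ferrand's approach: construct the pushout affine-locally via a ring-theoretic fibre product, then glue the local pieces into a scheme using the $(AF)$ hypothesis.

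First I would treat the affine case. Suppose $\bar X = \Spec \bar A$, $Y = \Spec B$ and (since $g$ is finite and $\bar\jmath$ is a closed embedding) $\bar Y = \Spec \bar B$ with $\bar B$ both a quotient of $\bar A$ and finite as a $B$-module. The natural candidate is the ring-theoretic fibre product
\[
A := \bar A \times_{\bar B} B = \{(a,b) \in \bar A \times B : \pi(a) = g^{\sharp}(b)\},
\]
where $\pi \colon \bar A \twoheadrightarrow \bar B$ is the quotient and $g^{\sharp} \colon B \to \bar B$ is the ring map corresponding to $g$. Setting $X := \Spec A$, one checks that $A \to B$ is surjective (with kernel mapping isomorphically to the ideal of $\bar Y$ in $\bar X$), so $j$ is a closed embedding; that $\bar A$ is finite over $A$, essentially because $\bar B$ is finite over $B$ and lifts of a $B$-generating set of $\bar B$, together with $1$, generate $\bar A$ over $A$; and that the square is both cartesian and cocartesian in affine schemes. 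Condition (c) is immediate since $A \to \bar A$ becomes an isomorphism after inverting the conductor.

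Next I would globalize using $(AF)$. The crucial point is to produce an affine open cover $\{V_\alpha\}$ of $Y$ together with compatible affine opens $\bar U_\alpha \subset \bar X$ satisfying $\bar U_\alpha \cap \bar Y = g^{-1}(V_\alpha)$. The existence of such compatible affine pairs uses both $(AF)$ hypotheses together with finiteness of $g$: given $y \in Y$, the finite preimage $g^{-1}(y)$ lies in an affine open $\bar U \subset \bar X$ by $(AF)$; using that $g$ is closed, one finds an affine $V \ni y$ in $Y$ (via $(AF)$ on $Y$) with $g^{-1}(V) \subset \bar U$, then shrinks further so that $\bar U \cap \bar Y = g^{-1}(V)$. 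Applying Step~1 to each pair produces affine schemes $X_\alpha$; functoriality of the fibre product of rings yields compatible gluing data on overlaps, giving a scheme $X$ with morphisms $f$, $j$. Properties (a), (b), (c) are local on $X$ and follow from Step~1. For $(AF)$ on $X$ itself, given a finite set $T \subset X$, decompose $T = T_1 \sqcup T_2$ with $T_1 \subset j(Y)$ and $T_2$ in the complement; enclose $j^{-1}(T_1)$ in an affine $V \subset Y$ and $f^{-1}(T_1) \cup T_2$ in an affine $\bar U \subset \bar X$ with $\bar U \cap \bar Y = g^{-1}(V)$, and take $\Spec$ of the corresponding fibre product.

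The main obstacle is the common affine refinement used in the globalization: coordinating affine opens in $\bar X$ and in $Y$ so that they meet $\bar Y$ in exactly matching loci uses all three hypotheses (finiteness of $g$, $(AF)$ on $\bar X$, and $(AF)$ on $Y$) in a substantive way. Once this is in place, the rest of the argument is essentially formal from the affine construction.
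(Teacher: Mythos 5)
Your proposal is correct and follows essentially the same route as the paper, which attributes the result to Ferrand and, in Remark \ref{rem: glue-local}, describes exactly your affine construction $A=\bar A\times_{\bar B}B$ before globalizing via the $(AF)$ hypothesis. You also correctly isolate the one genuinely delicate point (producing matching affine pairs $\bar U_\alpha\cap\bar Y=g^{-1}(V_\alpha)$), which is where Ferrand's argument does the real work.
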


The scheme $X$  whose existence is given in  Theorem \ref{thm: glue} is called the {\em push-out scheme  obtained from $\bar X$ by gluing (pinching) $\bar X$ along $\bar Y$ via $g$}; following \cite{Ferrand}, we   often write $X=\bar X\sqcup_{\bar Y}Y$. 

\begin{rem}\label{rem: glue-local}
 Theorem  \ref{thm: glue} is proven by considering the affine case first and then showing that the construction globalizes. 
In the affine case  $S=\Spec R$,  $\bar X=\Spec \bar A$, $\bar Y=\Spec \bar B$, where $\bar B=\bar A/I $ and $Y =\Spec B$,  one has $X= \Spec A$, where $A:= \bar A \times _{\bar A/I}B$.
By \cite[Thm.~41]{kollar-quotients}, if $\bar A$ is a finitely generated $R$-algebra, so is $A$. It follows that if $\bar X$ is of finite type over $S$, so is $X$.
\end{rem}

\begin{rem} \label{rem: cartesian}
It is well known (cf.~\cite{Ferrand}, Lemme 1.2) that a diagram:
\begin{equation}\label{diag: basic}
\begin{CD}
A@>\psi>>\bar A \\
@V p VV @VV{\bar p}V\\
B @>>> \bar B
\end{CD}
\end{equation}
where $p$ and $\bar p$ are surjective is cartesian if and only if $\psi$ maps $I:=\ker p$ isomorphically onto $\ker \bar p$. 
\end{rem}
\begin{rem}\label{rem: flat-base-change}
By Remark \ref{rem: cartesian},  if  diagram \eqref{diag: basic} is cartesian and we tensor it with a  flat $A$-algebra $R$ the resulting diagram is also cartesian; so by Remark \ref{rem: glue-local} if we take base change  of a pushout diagram as \eqref{eq: diag1} by a flat map $Z\to X$, the resulting diagram is cocartesian. \end{rem}
The following example describes the local situation we are interested in: 
\begin{ex}\label{ex: pinch}
Let $S=\Spec\K$, $\bar X=\A^2_{x,y}$, $\bar Y=\{y=0\}\subset \bar X$, $Y=\A^1_t$ and let   $g\colon \bar Y\to Y$ be the map given  by $(x,0)\mapsto x^2$. It is easy to check that $A=\K[x,y]\times_{\K[x]}\K[t]$ is generated as a $\K$-algebra by $u=( xy,0), v=(y, 0), w=(x^2,t)$. The generators satisfy the relation $u^2-v^2w=0$,  so $X$ is isomorphic to the hypersurface $\{u^2-v^2w=0\}\subset \A^3_{u,v,w}$. The singular point $(0,0,0)\in X$ is called a {\em pinch point}.

A similar (simpler) situation is the following: $\bar X=\{z^2-1=0\}\subset \A^3_{x,y,z}$, $\bar Y= \{z^2-1=y=0\}$, $Y=\A^1_t$ and   $g\colon \bar Y\to Y$ is defined by $(x, 0,z)\mapsto x$. Arguing as above we see that $X$ is isomorphic to $\{uv=0\}\subset \A^3_{u,v,w}$, $f\colon \bar X \to X$ is given by $(x,y,z)\mapsto (y(z-1),y(z+1),x)$ and $j\colon \A^1_t\to X$ is given by $t\mapsto(0,0,t)$.
\end{ex}

 \subsection{Semi-smooth varieties as push-out schemes}\label{sec: semi-smooth}
  \begin{defn}\label{def: semi-smooth}
An $n$-dimensional  variety $X$ over $\K$ is called {\em semi-smooth} if it is locally \'etale isomorphic\footnotemark\ to  $P_n:= \Spec\K[u,v,w]/(u^2-v^2w)\times\A^{n-2}$. 
\footnotetext{A variety $X$ is {\em locally \'etale isomorphic} to a variety $Y$ if $X$ can be covered by \'etale open sets isomorphic to \'etale open subsets of $Y$}
Points of $X$ corresponding to points of $\{u=v=w=0\}\subset P_n$ are called {\em pinch points};  points corresponding to points of $\{u=v=0, w\ne 0\}\subset P_n$ are {\em double crossings (dc) points}.

\end{defn}
\begin{rem}\label{rem: demi-normal}
 Semi-smooth varieties are locally complete intersections (lci), since the lci condition is local in the \'etale topology 
 (\cite[\href{https://stacks.math.columbia.edu/tag/08UQ}{Tag 06C3}]{stacks}). In particular, a semi-smooth variety  is $S_2$ and therefore  it is demi-normal (cf.~\cite[Def.~5.1]{kollar-sings}).
\end{rem}

\begin{rem}\label{rem: pinch}
Note that, in view of Example \ref{ex: pinch},  $P_n$ fits in the following cocartesian diagram:
\begin{equation}\label{diag: pinch}
\begin{CD}
\A^{n-1}_{x,t_1,\dots t_{n-2}} @>g>>\A^{n-1}_{w,t_1,\dots t_{n-2}} \\
@V {\bar{\jmath}} VV @VV j V\\
\A^n_{x,y,t_1,\dots t_{n-2}}@>f >>P_n
\end{CD}
\end{equation}
 where $\bar{\jmath} (x,t_1,\dots t_{n-2})=(x,0, t_1,\dots t_{n-2})$, $j(w,t_1,\dots t_{n-2})=(0,0, w,  t_1,\dots t_{n-2})$, $g(x, t_1,\dots t_{n-2})\mapsto (x^2, t_1,\dots t_{n-2})$. The map $f$ is given by $(x,y,t_1,\dots t_{n-2})\mapsto (xy,y,x^2, t_1,\dots t_{n-2}))$.
\end{rem}

Remark \ref{rem: pinch} suggests the following characterization of semi-smooth varieties, which we will  use systematically in \S \ref{sec: def}  to reduce computations to the situation  of diagram \eqref{diag: pinch}.

\begin{prop} \label{prop: semi-smooth-glue}
Let $X$ be an $n$-dimensional variety  over $\K$ that satisfies condition (AF).  Then the following  are equivalent:
\begin{enumerate}
\item $X$ is semi-smooth 
\item There exist a smooth variety $\bar X$,   a smooth divisor $\bar Y\subset X$ and a finite degree 2 map $g\colon \bar Y\to Y$ with $Y$ smooth such that $X=\bar X \sqcup_{\bar Y} Y$. 
\end{enumerate}
\end{prop}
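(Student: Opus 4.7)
The plan is to exploit the \'etale-local nature of both conditions, flat base change for pushouts (Remark \ref{rem: flat-base-change}), and the explicit local model of Remark \ref{rem: pinch}, which identifies the pushout of $\A^n\supset\{y=0\}$ along the cover $x\mapsto x^2$ with $P_n$. I treat the two implications in turn; in both directions, after reducing to an \'etale neighborhood of a point, I aim to reach the setup of Remark \ref{rem: pinch} or a similarly explicit pushout and conclude by Remark \ref{rem: flat-base-change}.

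For $(2)\Rightarrow(1)$, fix $x\in X$. If $x\notin Y$, then $X$ is smooth at $x$ by Theorem \ref{thm: glue}(c). Otherwise set $y=j^{-1}(x)\in Y$; since $\car\K\ne 2$ and $g\colon\bar Y\to Y$ is a finite degree $2$ map between smooth varieties, the classical structure theorem for tame double covers gives that \'etale-locally near any point of $g^{-1}(y)$, the map $g$ is either trivial (unramified) or of Kummer type $x\mapsto x^2$ (ramified); combined with the fact that $\bar Y\subset\bar X$ is a smooth Cartier divisor, this lets me pick \'etale coordinates on $\bar X$ near each preimage so that $\bar X\cong\A^n$ and $\bar Y=\{y=0\}$. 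In the ramified case the situation matches Remark \ref{rem: pinch} verbatim, so the \'etale-local pushout is $P_n$ and $x$ is a pinch point. In the unramified case, with two preimages $q_1,q_2$, \'etale locally $\bar X$ decomposes as the disjoint union of two charts $\A^n_{y_i,t}$ glued along their common divisor $\{y_i=0\}\cong\A^{n-1}_t$; the pushout computes to $\K[y_1,t]\times_{\K[t]}\K[y_2,t]\cong\K[u,v,t]/(uv)$, a double crossing, which is \'etale-isomorphic to $P_n$ on the locus $\{w\ne 0\}$ via $u'=u+v,\ v'=u-v$. In either case $X$ is semi-smooth.

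For $(1)\Rightarrow(2)$, I take $\bar X$ to be the normalization $\nu\colon\bar X\to X$. Since normalization commutes with \'etale base change and the normalization of $P_n$ is $\A^n$ (as exhibited by the map of Remark \ref{rem: pinch}), $\bar X$ is smooth and $\nu$ is finite. Let $\mathfrak c$ be the conductor ideal, viewed as an ideal both of $\OO_X$ and of $\nu_*\OO_{\bar X}$, and set $Y:=V(\mathfrak c)\subset X$, $\bar Y:=V(\mathfrak c\cdot\OO_{\bar X})\subset\bar X$ with their natural scheme structures. A direct calculation in the model $P_n$ identifies $\mathfrak c$ with the principal ideal $(v)$; the resulting $\bar Y\cong\A^{n-1}_{x,t}$ and $Y\cong\A^{n-1}_{w,t}$ are smooth, and $g\colon\bar Y\to Y$ is $(x,t)\mapsto (x^2,t)$, finite of degree $2$. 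Since $X$ satisfies (AF) and $\nu$ is finite, so do $\bar X,\bar Y,Y$, so the pushout $\bar X\sqcup_{\bar Y}Y$ exists by Theorem \ref{thm: glue}; the natural comparison map $\bar X\sqcup_{\bar Y}Y\to X$ is an \'etale-local isomorphism by the local calculation, hence a global isomorphism in view of Remark \ref{rem: flat-base-change}.

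The main obstacle is the \'etale-local analysis in the unramified subcase of $(2)\Rightarrow(1)$: one must first pass to an \'etale neighborhood of $y\in Y$ that splits the double cover, then choose disjoint \'etale neighborhoods of $q_1$ and $q_2$ in $\bar X$ (which may well sit in a single connected Zariski chart of $\bar X$), and then reassemble the pushout via Remark \ref{rem: flat-base-change}. The direction $(1)\Rightarrow(2)$ is comparatively mechanical once one verifies that the conductor really endows $Y$ with its correct smooth scheme structure, rather than with embedded components coming from the singular equation of $X$.
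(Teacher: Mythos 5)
Your overall architecture matches the paper's (Appendix~\ref{app: A}): local models and pushouts for $(2)\Rightarrow(1)$, normalization and conductor for $(1)\Rightarrow(2)$. But there is a genuine gap in your $(2)\Rightarrow(1)$: Remark~\ref{rem: flat-base-change} cannot ``reassemble the pushout'' for you, because it runs in the wrong direction. That remark starts from a flat map $Z\to X$ \emph{to the pushout} and concludes that the pulled-back diagram is cocartesian; what you need is the converse: given \'etale maps from neighbourhoods in $\bar X$, $\bar Y$, $Y$ to the standard gluing data of $P_n$ (or of $\{uv=0\}$), with the relevant squares cartesian, conclude that the induced map of pushouts $U_P\to P_n$ is \'etale. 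This is not formal, and it is exactly what the paper imports as \cite[Lem.~44]{kollar-quotients}. The difficulty is most acute in your unramified subcase: the \'etale localization that splits the double cover is chosen on $Y$ (or $\bar Y$), and there is no a priori way to realize it as the pullback of an \'etale neighbourhood of $x$ in $X$ --- that would presuppose knowing the local structure of $X$ at $x$, which is what you are trying to prove. The paper sidesteps splitting the cover altogether: in the two-preimage case it keeps the antiinvariant function $\bar x$, which is a unit near $Q_1,Q_2$, arranges (after multiplying by an invariant unit) that it has nonzero differential at both points, and maps directly to $P_n$, landing in the locus $w\ne 0$; all shrinking is then Zariski-open and is handled by Lemma~\ref{lem: open}.

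Your $(1)\Rightarrow(2)$ genuinely differs from the paper's in how it closes: you check that the comparison map $\bar X\sqcup_{\bar Y}Y\to X$ is an isomorphism \'etale-locally, whereas the paper identifies it with the weak normalization and uses that both source and target are seminormal (Lemma~\ref{lem: Xbar-smooth}). Your version can be made to work, and here Remark~\ref{rem: flat-base-change} genuinely applies (for $U\to X$ \'etale, $U\times_X X'\to X'$ is flat, so $U\times_X X'$ is the pushout of the restricted data), provided you also invoke Lemma~\ref{lem: base-change} for the compatibility of normalization and conductor with \'etale maps; this buys you independence from the seminormality input at the cost of more base-change bookkeeping. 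One computational slip to fix: in the model $P_n$ the conductor is principal only as an ideal of $\OO_{\bar X}$, where it is $(y)$; as an ideal of $\OO_X$ it is $(u,v)$, not $(v)$. With $(v)$ alone you would get $Y=\Spec\K[u,w,t_1,\dots,t_{n-2}]/(u^2)$, which is non-reduced, rather than the smooth $Y=\{u=v=0\}$ you assert --- precisely the pitfall about embedded components that you flag at the end.
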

 
 \begin{rem}\label{rem: semi-smooth-glue} In the situation of Proposition \ref{prop: semi-smooth-glue}, the variety $\bar X$ is the normalization of $X$ and $\bar Y$, resp. $Y$ are the subschemes of $\bar X$, resp. $X$ defined by the conductor (cf. \S \ref{ssec: A2}).
  The branch locus $D$ of $g$  is the set of pinch points of $X$.
 \end{rem} 
  Proposition \ref{prop: semi-smooth-glue} is very likely   well known to experts; for lack of a suitable reference we give the proof, which is a bit   lengthy,  in Appendix \ref{app: A}.
 
 For a semi-smooth variety $X=\bar X\sqcup_{ \bar Y} Y$ the support of the singular locus $X\sing$ (cf.~Definition  \ref{def: SIng}) is equal to $Y$ and $X_{\sing}$ is non reduced  at the pinch points of $X$; more precisely one has: 

\begin{lem}\label{lem: IXsing}
Let $X=\bar X\sqcup_{ \bar Y} Y$ be  a semi-smooth variety and let $D\subset Y$ be  the closed subset of pinch points of $X$;  then the ideal $I_{Y|X\sing}$ is an invertible sheaf on $D$. 

\end{lem}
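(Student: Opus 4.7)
The plan is to check the statement étale-locally on $X$, using the explicit local model from Definition~\ref{def: semi-smooth}, and to reduce to a direct calculation in coordinates.

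Since $X$ is étale-locally isomorphic to $P_n = \Spec \K[u,v,w,t_1,\ldots,t_{n-2}]/(u^2 - v^2 w)$, and both $I_Y$ and $I_{X\sing}$ (the latter by the Jacobian characterization of Definition~\ref{def: SIng}) are compatible with étale base change, the strategy is to identify $Y$ and $D$ inside this local model, compute the relevant ideals explicitly, and verify the invertibility by inspection. I would use Remark~\ref{rem: pinch}, together with the description of $Y$ as the conductor subscheme coming from Remark~\ref{rem: semi-smooth-glue}, to identify $Y$ with $V(u,v)\subset P_n$ and $D$ with $V(u,v,w)\subset P_n$.

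In the local model, using $\car \K \ne 2$, the Jacobian description gives $I_{X\sing} = (u, v^2, vw)$, whence
\[
\OO_{X\sing} \cong \K[v, w, t_1, \ldots, t_{n-2}]/(v^2, vw).
\]
The containment $I_{X\sing} \subseteq I_Y = (u,v)$ confirms $Y \subseteq X\sing$ as closed subschemes, and $I_{Y|X\sing}$ is the image of $(v)$ in $\OO_{X\sing}$. Since $v\cdot v = 0$ and $v\cdot w = 0$ there, the $\OO_{X\sing}$-action on $(v)$ factors through $\OO_D = \OO_{X\sing}/(v,w) \cong \K[t_1,\ldots,t_{n-2}]$, and $(v)$ is free of rank one over $\OO_D$ with generator $v$. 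This shows that $I_{Y|X\sing}$ is supported exactly on $D$ and is invertible there, completing the argument étale-locally and hence globally.

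I expect the only subtle point to be the identification of the global subscheme $Y$ with $V(u,v)$ in the étale local model: this combines the gluing characterization of Proposition~\ref{prop: semi-smooth-glue} with the explicit description of the local model in Remark~\ref{rem: pinch}. Once this identification is in place, the remainder is a direct coordinate computation and presents no real obstacle.
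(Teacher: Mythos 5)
Your proposal is correct and follows essentially the same route as the paper: reduce to the \'etale local model $P_n$, compute $I_Y=(u,v)$ and $I_{X\sing}=(u,v^2,vw)$, and observe that the quotient is generated by the class of $v$ and is free of rank one over $\OO_D$. The only difference is that you spell out the step the paper dismisses as ``immediate to check'' (that $v\cdot v$ and $v\cdot w$ vanish in $\OO_{X\sing}$ and that $v$ is not annihilated by any nonzero polynomial in the $t_i$), which is a welcome addition rather than a deviation.
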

\begin{proof}
It is enough to prove the claim for the pinch point $P_n$. In this case  the ideal of $Y$  in $\A^n_{u,v, w,t_1\dots t_{n-2}}$ is  $I_Y=(u,v)$ while the ideal of $X_{\sing}$ is $I_{X\sing}=(u,v^2, vw)$ and it is immediate to check that  $I_{Y|X\sing}=I_Y/ I_{X\sing}$ is supported on $D=\Spec\K[t_1,\dots t_{n-2}]$ and it is generated by the class of $v\in \K[t_1,\dots t_{n-2}]$. 
\end{proof}
\subsection{Gluing in families}\label{ssec: glue-in-family}
We show that the gluing construction of Theorem \ref{thm: glue} commutes with specialization under mild hypotheses: 
\begin{prop}\label{prop: special_glue} Notation and  assumptions as in \S \ref{ssec: generalities}.

Assume that  $\bar Y$ and  $Y$  are flat over $S$ and let $s\in S$ be a point. 
Then the diagram: 
\[
\begin{CD}
\bar Y_s @>{g|_{\bar Y_s}}>>Y_s\\
@V {{\bar \jmath }|_{Y_s}} VV @VV {j|_{ Y_s}}V\\
\bar X_s@>{f|_{\bar X_s}} >>X_s
\end{CD}
\]
is cocartesian.
\end{prop}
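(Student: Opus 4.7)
The statement is Zariski-local on $X_s$, so by Theorem~\ref{thm: glue} and property (AF) I would first reduce to the affine case: $S=\Spec R$, $\bar X=\Spec\bar A$, $Y=\Spec B$, $\bar Y=\Spec\bar B$, with $\bar B=\bar A/I$ for some ideal $I\subset \bar A$ and, by Remark~\ref{rem: glue-local}, $X=\Spec A$ where $A=\bar A\times_{\bar B}B$. Let $\varphi\colon B\to\bar B$ be the map induced by $g$, let $\bar p\colon\bar A\twoheadrightarrow\bar B$ be the quotient map, and set $\kappa:=k(s)$. Since passing to the fibre at $s$ corresponds to $-\otimes_R\kappa$ on coordinate rings, the goal becomes showing
\[
A\otimes_R\kappa\;=\;(\bar A\otimes_R\kappa)\times_{(\bar B\otimes_R\kappa)}(B\otimes_R\kappa);
\]
right-exactness of $-\otimes_R\kappa$ preserves the surjectivity of $\bar p$, so this fibre product is in turn the Ferrand pushout of the fibre data by Theorem~\ref{thm: glue}.

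The engine of the argument is the Mayer--Vietoris short exact sequence attached to the surjection $\bar p$:
\[
0\longrightarrow A\longrightarrow \bar A\oplus B\xrightarrow{\;\bar p-\varphi\;}\bar B\longrightarrow 0,
\]
where exactness at $\bar A\oplus B$ is the fibre-product definition of $A$ (Remark~\ref{rem: cartesian}) and surjectivity on the right follows from that of $\bar p$. Applying $-\otimes_R\kappa$ produces a Tor long exact sequence; the crucial input is that $\bar Y\to S$ flat implies $\bar B$ is flat over $R$, hence $\Tor_1^R(\bar B,\kappa)=0$. The sequence therefore remains short exact after base change, identifying $A\otimes_R\kappa$ with $\ker\bigl((\bar A\otimes_R\kappa)\oplus(B\otimes_R\kappa)\to\bar B\otimes_R\kappa\bigr)$, which is exactly the fibre product above.

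The only non-formal point is the reduction to the affine setting, i.e.\ verifying that Ferrand's pushout is compatible with restriction to affine open neighbourhoods. This is essentially built into the construction of \cite{Ferrand} and is of the same flavour as Remark~\ref{rem: flat-base-change}. I remark that the flatness of $Y$ plays no direct role in the algebraic identification above: only the flatness of $\bar Y$ is needed, to kill $\Tor_1^R(\bar B,\kappa)$; flatness of $Y$ serves merely to ensure that $Y_s$ is the expected scheme-theoretic fibre.
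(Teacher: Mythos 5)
Your proof is correct, and it reaches the same key mechanism as the paper (reduce to the affine case, then use flatness to kill a $\Tor_1$ obstruction to base change), but the homological packaging is genuinely different. The paper tensors the cartesian square of Remark~\ref{rem: cartesian} with $R/\mathfrak m_s$ and verifies the cartesian criterion directly: it uses flatness of $B$ to identify $\ker(p\otimes 1)$ with $I\otimes_R R/\mathfrak m_s$, flatness of $\bar B$ to identify $\ker(\bar p\otimes 1)$ with $J\otimes_R R/\mathfrak m_s$, and then invokes the isomorphism $I\cong J$. You instead encode the fibre product in the single Mayer--Vietoris sequence $0\to A\to \bar A\oplus B\to \bar B\to 0$ and apply the $\Tor$ long exact sequence once; this is slightly slicker and, more interestingly, it exposes that only $\Tor_1^R(\bar B,k(s))=0$, i.e.\ flatness of $\bar Y$ over $S$, is actually needed for the cocartesian conclusion --- and indeed one can check that even in the paper's two-sequence argument the flatness of $B$ is dispensable, since $\ker(p\otimes 1)$ is a priori a quotient of $I\otimes_R R/\mathfrak m_s$ and the composite $I\otimes_R R/\mathfrak m_s\twoheadrightarrow\ker(p\otimes 1)\to\ker(\bar p\otimes 1)\cong J\otimes_R R/\mathfrak m_s$ is already an isomorphism. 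So your remark that flatness of $Y$ only serves to make $Y_s$ the ``expected'' fibre is accurate, and your version of the statement is marginally more general. The one point you flag as non-formal --- compatibility of Ferrand's pushout with restriction to suitable affine opens --- is indeed built into the construction (the paper invokes it with the same brevity via Remark~\ref{rem: glue-local}), and the identification of the fibre product of the fibres with the Ferrand pushout of the fibre data via Theorem~\ref{thm: glue} is legitimate since the base-changed data is still a closed embedding together with a finite morphism of affine schemes.
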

\begin{proof}
Since the construction of the pushout scheme $X$ is local, we may assume that all the schemes involved are  affine, namely $S=\Spec R$, $\bar X=\Spec \bar A$, $\bar Y=\Spec \bar B$, $Y=\Spec B $, $X=\Spec A$. We have a cartesian diagram
\[
\begin{CD}
A@>\psi>>\bar A \\
@V p VV @VV{\bar p}V\\
B @>>> \bar B
\end{CD}
\]
where $\bar p$ and $p$ are surjective and $\psi$ gives an isomorphism $I:=\ker p\to J:=\ker \bar p$ (cf.~Remark \ref{rem: cartesian}). Tensoring with $R/\mathfrak m_s$, where $\mathfrak m_s\subset R$ is the ideal of $s$, we obtain a commutative diagram:
\begin{equation}\label{eq: tensor}
\begin{CD}
A\otimes_R R/ \mathfrak m_s@>{\psi\otimes 1_R}>>\bar A\otimes_R R/\mathfrak m_s \\
@V {p \otimes 1_R}VV @VV{\bar p\otimes 1_R}V\\
B\otimes_R R/\mathfrak m_s @>>> \bar B \otimes_R R/\mathfrak m_s
\end{CD}
\end{equation}
where the vertical arrows are surjective. The claim is equivalent to diagram \eqref{eq: tensor} being cartesian. By Remark \ref{rem: cartesian}, this is in turn equivalent to the fact that $\psi$ gives an isomorphism $\ker (p \otimes 1_R)\to \ker (\bar p \otimes 1_R)$. 
Since $B$ is flat over $R$, we have an exact sequence: 
$$0=\Tor_1(B, R/\mathfrak m_s)\to I\otimes_R R/\mathfrak m_s\to A \otimes_R R/\mathfrak m_s\to B \otimes_R R/\mathfrak m_s\to 0,$$
so $\ker( p \otimes 1_R)=I\otimes_R R/\mathfrak m_s$. Analogously, we have $\ker (\bar p \otimes 1_R)=J\otimes_R R/\mathfrak m_s$ because $\bar B$ is also flat over $R$, and we conclude by noting that $\psi\otimes 1_R$ gives an isomorphism $I\otimes_R R/\mathfrak m_s\cong J\otimes_R R/\mathfrak m_s$.
\end{proof}

\section{Double covers}\label{sec: double}

In this section we consider quasi-projective varieties and we consider double covers in this category.

Throughout all the section we fix  quasi-projective varieties $Y$ and $\bar Y$ over the algebraically closed field $\K$ of characteristic $\ne 2$ and a finite flat degree $2$ morphism $g\colon \bar Y\to Y$. The sheaf $g_*\OO_{\bar Y}$ is locally free of rank 2 and there is a short exact sequence
$$0\to \OO_Y\to g_*\OO_{\bar Y}\to \mathcal Q\to 0.$$

\subsection{Definitions and notation} \label{ssec: double-def}

In this subsection we recall quickly the main facts about (flat) double covers and set the notation. More details can be found in  \cite{rita-abel} (for the normal case)   and \cite{rita-valery}. 
\medskip

Denote by $\Tr\colon g_*\OO_{\bar Y}\to\OO_Y$ the trace map:  then $\frac 12\Tr$ splits the above sequence and gives  a canonical decomposition $g_*\OO_{\bar Y}=\OO_Y\oplus \mathcal Q$. So the rank 1 sheaf  $\mathcal Q$ is projective, and therefore  free; it is traditional to write $\mathcal Q=L\inv$ for a suitable line bundle $L$. A local computation shows that if $\Tr(z)=0$ then $z^2\in \OO_Y$, so we can define an algebra involution  of $g_*\OO_{\bar Y}$ by defining it as the identity on $\OO_Y$ and multiplication by $-1$ on $L\inv$; we denote by $\iota$ the corresponding involution of $\bar Y$ and we remark  that $g\colon \bar Y\to Y$ is the corresponding quotient map. 

 The $\OO_Y$-algebra structure of $g_*\OO_{\bar Y}$ is determined by the multiplication map $s\colon L\inv \otimes L\inv \to \OO_Y$. The  surjection  $\sym(L\inv )\to \OO_Y\oplus L\inv$ induced by $s$ gives  a closed embedding $\bar Y\into V_Y(L\inv)$. In other words, over an open subset $U\subset Y$ such that $L|_U$ is trivial  $\bar Y$ is given by $\{z^2-b=0\}\subset U\times \A^1_z$, where $b$ is the local expression of $s$,  and $g$ is induced by the projection $U\times \A^1_z\to U$. The branch divisor  $D\subset Y$ of $g$ is the divisor of zeros of $s$ and the ramification locus $ R\subset \bar Y$ is  defined locally by $z=0$, so that $g^*D=2R$ and $\OO_{\bar Y}(R)\cong g^*L$. The cover is unbranched (\'etale) if $D=0$. 
 
 \begin{rem} If $Y$ is smooth and $\bar Y$ is $S_2$ (e.g., it is normal) then a  finite degree 2 morphism $\bar Y\to Y$  is automatically flat.
 \end{rem}
 \begin{rem}
  Two sections $s,s'\in H^0(Y, L^2)$ determine isomorphic double covers of $Y$ iff there is $\lambda\in H^0(Y, \OO_Y^*)$ such that $s'=\lambda^2 s$.  
  So if $H^0(Y,\OO_Y^*)=\K^*$ (e.g., $Y$ is projective) the double cover is determined up to isomorphism by the line bundle $L$ and by the choice of an effective divisor $D\in   |2L|$. Sometimes we say that $g$ is the double cover given by the relation $2L\sim D$. 
\end{rem}

\subsection{Embedding double covers in $\pp^1$-bundles}

Composing the natural  embedding $\bar Y\into V(L\inv)$ with the   inclusion $V(L\inv)\subset \pp(\OO_Y\oplus L\inv )$  one gets a closed embedding
 of $\bar Y$  as a bisection of $\pp(\OO_Y\oplus L\inv)$. In this subsection we show that  $\bar Y$ has a closed embedding as a  bisection of 
 the $\pp^1$-bundle $\pp_Y(g_*M)$ for any line bundle $M$ on 
 $\bar Y$.
\medskip

Fix a line bundle $M\in \Pic(\bar Y)$ and set $E:=g_*M$. 
We denote by $p\colon \pp_Y(E)\to Y$ the projection and by $h$ the class of $\OO_{\pp_Y(E)}(1)$ in the Picard group. 
Then one has:
\begin{prop} \label{prop: embedding}
In the above setup: 
\begin{enumerate}
\item 
The natural map $g^*E\to M$ is a surjection that induces a  morphism  $\tilde g\colon \bar Y\to \pp_Y(E)$ 
\item $\tilde g$ is a closed  embedding and $\tilde g(\bar Y)$ is a Cartier divisor 
\item   $\tilde g(\bar Y)$ is linearly equivalent to  $2h+p^*(L-\det E)$.
\end{enumerate}
\end{prop}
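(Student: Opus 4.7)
For (i) and (ii) I would proceed as follows. The map $g^*E = g^*g_*M \to M$ in (i) is the counit of the adjunction $g^*\dashv g_*$; it is surjective since $g$ is affine (locally it is the multiplication $A\otimes_B M\to M$, $a\otimes m\mapsto am$). The universal property of $\pp_Y(E) = \Proj\sym E$ then produces a unique $Y$-morphism $\tilde g\colon \bar Y\to \pp_Y(E)$ such that $\tilde g^*\OO(1) = M$. For (ii) I would work on an affine open $U = \Spec B\subset Y$ trivializing both $L$ and $M$: there $g^{-1}(U) = \Spec A$ with $A = B[z]/(z^2 - b)$ (for $b$ a local expression of $s$), and a local generator $m$ of $M$ gives the basis $(m, zm)$ of $E|_U$, with respect to which the counit sends $(e_0, e_1) \mapsto (1, z)$ (in the generator $m$). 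The induced morphism $\Spec A\to \pp^1_U$ factors through the affine chart $\{e_0\neq 0\} = \Spec B[y]$ as the surjective ring map $B[y]\twoheadrightarrow A$, $y\mapsto z$; hence $\tilde g$ is a closed embedding whose image is the Cartier divisor cut out by $e_1^2 - b\,e_0^2 = 0$.

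For (iii) the plan is to realize $\tilde g(\bar Y)$ globally as the zero locus of a section of $\OO_{\pp(E)}(2)\otimes p^*K^{-1}$ for an explicit line bundle $K$ on $Y$, and then to compute $K$. Consider the multiplication $\mu\colon \sym^2 E\to g_*M^2$ induced on local sections by squaring elements of $E = g_*M$ viewed as sections of $M$ over $\bar Y$. In the chart of the previous paragraph it sends $e_0^2, e_0e_1, e_1^2$ to $1, z, z^2=b$ in the basis $(1, z)$ of $g_*M^2$, so $\mu$ is surjective with line-bundle kernel $K$ locally generated by $b\,e_0^2 - e_1^2$. The inclusion $K\hookrightarrow \sym^2 E = p_*\OO_{\pp(E)}(2)$ corresponds, by the $(p^*,p_*)$-adjunction, to an injection $p^*K\hookrightarrow \OO_{\pp(E)}(2)$, i.e.\ a global section of $\OO_{\pp(E)}(2)\otimes p^*K^{-1}$ with local expression $b\,e_0^2 - e_1^2$. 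By (ii), this section vanishes precisely along $\tilde g(\bar Y)$, so $\tilde g(\bar Y)\sim 2h + p^*K^{-1}$.

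To identify $K$, which is the main technical point, I would take determinants of the short exact sequence $0\to K\to \sym^2 E\to g_*M^2\to 0$ to get $K = (\det E)^3\otimes (\det g_*M^2)^{-1}$. I would then compute $\det g_*M^2$ via the standard identity $\det g_*N = \mathrm{Nm}(N)\otimes L^{-1}$ for line bundles $N\in \Pic(\bar Y)$ under the degree-$2$ flat cover $g$ (since $\det g_*\OO_{\bar Y} = L^{-1}$), together with the multiplicativity of the norm $\mathrm{Nm}\colon \Pic(\bar Y)\to \Pic(Y)$. Applied to $N = M$ this reads $\mathrm{Nm}(M) = \det E\otimes L$; applied to $N = M^2$, $\det g_*M^2 = \mathrm{Nm}(M)^2\otimes L^{-1} = (\det E)^2\otimes L$. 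Substituting yields $K = \det E\otimes L^{-1}$, and therefore $\tilde g(\bar Y)\sim 2h + p^*(L - \det E)$, as claimed. If the norm formula is considered too high-powered a black box, the same $K$ can be obtained by a direct \v{C}ech-cocycle calculation tracking how the local equations $b\,e_0^2 - e_1^2$ transform on a cover trivializing both $E$ and $L$; this bookkeeping is the substantive content of the proposition, parts (i)--(ii) being essentially formal consequences of the universal property of $\pp(E)$.
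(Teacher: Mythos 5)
Your argument is correct, and for (i)--(ii) it is essentially the paper's: the paper also checks surjectivity of $g^*E\to M$ fibrewise (via cohomology and base change rather than your counit description, a cosmetic difference) and proves (ii) by restricting to opens of $Y$ over which $M$ is trivial, where $\tilde g$ becomes the standard embedding of $\bar Y$ into $\pp_Y(\OO_Y\oplus L\inv)$. One small caveat there: the existence of an affine cover $\{U_i\}$ of $Y$ with $M|_{g\inv(U_i)}$ trivial is not automatic for an arbitrary line bundle on $\bar Y$, and is exactly the content of the paper's Lemma \ref{lem: easy}; you should invoke or reprove it rather than simply ``take an affine open trivializing $M$''. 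For (iii) your route is genuinely different. The paper writes $\tilde g(\bar Y)=2h+p^*\Delta$ a priori (using that $\Pic(\pp_Y(E))$ is generated by $h$ and $p^*\Pic(Y)$), pushes the restriction sequence $0\to\OO(-2h-p^*\Delta)\to\OO\to\OO_{\tilde g(\bar Y)}\to 0$ down to $Y$, identifies $R^1p_*\OO(-2h-p^*\Delta)$ with $(\det E)\inv\otimes\OO_Y(-\Delta)$, and compares determinants with $g_*\OO_{\bar Y}=\OO_Y\oplus L\inv$. You instead exhibit the defining equation of $\tilde g(\bar Y)$ globally as a section of $\OO(2)\otimes p^*K\inv$ with $K=\ker(\sym^2E\to g_*M^2)$ and compute $K=\det E\otimes L\inv$ by determinants together with the norm formula $\det g_*N=\mathrm{Nm}(N)\otimes L\inv$ and its multiplicativity. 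Both are sound; your version avoids any appeal to the structure of $\Pic(\pp_Y(E))$ and produces the bisection's equation explicitly (very much in the spirit of the paper's later Theorem \ref{prop: normal-bundle}), at the cost of importing the norm formula (or the \v{C}ech bookkeeping you offer as a substitute). Note also that your identity $\mathrm{Nm}(M)=\det E\otimes L$ is precisely the descent along $g$ of the paper's Lemma \ref{lem: determinant}, so the two computations are reassuringly consistent.
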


Before  proving   Proposition \ref{prop: embedding}  we  note the following  elementary fact. 

\begin{lem}\label{lem: easy} 
Let $M$ be a line bundle on $\bar Y$. Then there is an affine open cover $\{U_i\}_{i\in I}$  of $Y$ such that $M|_{g\inv U_i}$ is trivial for all $i\in I$.
\end{lem}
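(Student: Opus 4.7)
The plan is to produce, for each $y \in Y$, an affine open neighborhood $U_y \subset Y$ of $y$ whose preimage trivializes $M$; the collection $\{U_y\}$ then gives the desired cover.

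First, I would use that $\bar Y$ is quasi-projective (and thus satisfies property $(AF)$ by Remark~\ref{rem: AF}) to find, for each $y\in Y$, an affine open $V_y\subset \bar Y$ containing the finite set $g^{-1}(y)$ on which $M$ is trivial. The $(AF)$ property immediately gives an affine open containing $g^{-1}(y)$, but one still needs to arrange that $M$ becomes trivial on a possibly smaller affine open still containing $g^{-1}(y)$. This is the one nontrivial step: fix an ample line bundle $A$ on $\bar Y$ (extend $M$ and $A$ to a projective closure if needed); for $n\gg 0$ both $A^{\otimes n}$ and $M\otimes A^{\otimes n}$ are globally generated, so one can choose sections $\tau\in H^0(A^{\otimes n})$ and $\sigma\in H^0(M\otimes A^{\otimes n})$ not vanishing at any point of $g^{-1}(y)$. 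The open set where $\tau$ and $\sigma$ are both nonzero is affine, contains $g^{-1}(y)$, and $\sigma/\tau$ trivializes $M$ there; this gives the sought $V_y$.

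Next I would descend to $Y$. Since $g$ is finite it is proper, and in particular closed, so $g(\bar Y\setminus V_y)$ is a closed subset of $Y$ not containing $y$. Hence $U'_y := Y\setminus g(\bar Y\setminus V_y)$ is an open neighborhood of $y$ in $Y$, and by construction $g^{-1}(U'_y)\subset V_y$. Since $Y$ is also quasi-projective, it satisfies $(AF)$, so one can shrink $U'_y$ to an affine open $U_y\subset U'_y$ containing $y$.

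Finally, $g^{-1}(U_y)\subset V_y$, so $M|_{g^{-1}(U_y)}$ is the restriction of the trivial bundle $M|_{V_y}$ and is therefore trivial. As $y$ varies over $Y$ the sets $U_y$ form an affine open cover of $Y$ with the required property. The main obstacle is really the first step---producing an affine open on which $M$ is trivial and which contains a prescribed finite set---everything after that is formal manipulation with the closedness of the finite map $g$ and property $(AF)$.
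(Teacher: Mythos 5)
Your proof is correct and follows essentially the same strategy as the paper's: twist $M$ by a high power of an ample class to get a section non-vanishing on the whole fibre $g^{-1}(y)$, hence a trivialization of $M$ on an open set containing that fibre, and then descend to an affine neighbourhood of $y$ in $Y$. The only (harmless) variation is in the descent step --- the paper takes the $\iota$-invariant open $V\cap\iota V$ and its image under the quotient map $g$, whereas you take $Y\setminus g(\bar Y\setminus V_y)$ using that the finite map $g$ is closed --- and your parenthetical claim that the locus $\{\tau\neq 0,\ \sigma\neq 0\}$ is affine is neither obvious for the non-proper $\bar Y$ nor needed, since you shrink to an affine open on $Y$ at the end anyway.
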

\begin{proof}
Pick  $y\in \bar Y$ and choose a very ample effective divisor $D$  such that $y, \iota (y)\notin D$.  For $d\gg 0$ the line bundle  $M':=M(dD)$ is globally generated. In particular, $M'$ has a section that is  non-zero at  the points $y$ and $\iota (y)$;   since $M'$ and $M$ are isomorphic on $\bar Y\setminus D$, it follows that $M$ can be trivialized on some affine open subset $V$ containing $y$ and $\iota (y)$. Setting $U:=g(V\cap \iota V)$  we have $g\inv (U)=V\cap \iota V$ and $M|_{g\inv U}$ is trivial. 
\end{proof}

\begin{proof}[Proof of Prop. \ref{prop: embedding}]
(i) Let $y\in Y$ be a point. Since $g$ is flat and finite, by cohomology and base change we have a canonical isomorphism  $E\otimes_{\OO_Y}\K(y)\cong H^0(g\inv(y), M_{|g\inv(y)})$. So, given $x\in g\inv(y)$ the map $g^*E\otimes_{\OO_{\bar Y}}\K(x)\to M\otimes_{\OO_{\bar Y}}\K(x)$ coincides with the restriction map $H^0(g\inv(y), M_{|g\inv(y)})\to H^0(\{x\}, M_{|\{x\}})$ and is therefore  surjective.  It follows that $g^*E\to M$ is surjective.
\medskip

(ii) The claim is local on $Y$. By Lemma \ref{lem: easy}, up to replacing  $Y$ by a suitable open subset we may assume that $M$ is the trivial bundle. So $\tilde g$ coincides with the usual closed embedding of $\bar Y$ as a Cartier divisor of $\pp_Y(\OO_Y\oplus L\inv)$. 
\medskip
 
(iii) Clearly, it is enough to prove the claim  for each connected  component of $Y$, hence we may assume  that $Y$ is connected.   The  Picard group of $\pp_Y(E)$ is generated by $h$ and $p^*(\Pic(Y))$;  since  $\tilde g(\bar Y)$ is a Cartier divisor that is a bisection of $p$,  we may write $\tilde g(\bar Y)=2h+p^*(\Delta)$ for some $\Delta \in \Pic(Y)$. 

Consider the restriction sequence
$$0\to \OO_{\pp_Y(E)}(-2h-p^*\Delta)\to  \OO_{\pp_Y(E)}\to \OO_{\tilde g(\bar Y)}\to 0;$$
pushing forward to $Y$ we obtain the exact sequence
\begin{equation}\label{eq: push}
0\to \OO_Y\to g_*\OO_{\bar Y}\to R^1p_* \OO_{\pp_Y(E)}(-2h-p^*\Delta)\to 0.
\end{equation}
We have  $$R^1p_* \OO_{\pp_Y(E)}(-2h-p^*\Delta)\cong R^1p_* \OO_{\pp_Y(E)}(-2h)\otimes \OO_Y(-\Delta)\cong (\det E)\inv \otimes \OO_Y(-\Delta),$$ where the first isomorphism is given by the projection formula and  the second one by \cite[Ex.~III.8.4]{Hartshorne}. Since $g_*\OO_{\bar Y}=\OO_Y\oplus L\inv$, the claim follows by taking determinants in \eqref{eq: push}. 
\end{proof}

\subsection{An explicit gluing construction}\label{ssec: glue-bundle}

We keep the notation of the previous section. 
\medskip

Set $\bar X:= V_{\bar Y}(M)$ and embed $\bar Y$ into $\bar X$ as the zero section; since quasi-projective varieties satisfy condition (AF) (cf.~Remark \ref{rem: AF}), by Theorem \ref{thm: glue} the pushout scheme  $X:=\bar X\sqcup_{\bar Y} Y$ exists. 
 In this section we prove the following result, which may be seen as a global version of Example \ref{ex: pinch}:

 \begin{thm}\label{prop: normal-bundle}
 Denote by  $q\colon V_Y(E)\to Y$  the natural projection; then 
there is a closed embedding $X\into V_{Y}(E)$ such that
  \[
  \OO_{V_Y(E)}(X)=q^*(L \otimes ( \det E)\inv).
    \]
    \end{thm}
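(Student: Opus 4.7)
The plan has three steps: construct a morphism $X\to V_Y(E)$ via the pushout property, verify locally that it is a closed embedding, and identify the line bundle $\OO_{V_Y(E)}(X)$ by pushing the ideal sheaf down to $Y$.

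For the construction, the surjection $g^*E = g^*g_*M\twoheadrightarrow M$ from Proposition~\ref{prop: embedding}(i) induces a closed $\bar Y$-embedding $\bar X = V_{\bar Y}(M)\hookrightarrow V_{\bar Y}(g^*E) = \bar Y\times_Y V_Y(E)$, which we compose with the projection to obtain a morphism $\bar X\to V_Y(E)$. Along the zero section $\bar Y\subset\bar X$ this restricts to $g$ followed by the zero section $j\colon Y\hookrightarrow V_Y(E)$, so the universal property of the pushout produces the required $X\to V_Y(E)$. To check that this is a closed embedding we reduce to a local computation: by Lemma~\ref{lem: easy} we cover $Y$ by affine opens $U = \Spec B$ such that $M|_{g^{-1}(U)}$ is trivial; writing $\bar B = B[z]/(z^2 - b)$ for the local affine ring of $\bar Y$ over $U$ and choosing a generator $m$ of $M$, the pair $(u,v) := (m,zm)$ is a local frame of $E = g_*M$, and the pinch-point computation of Example~\ref{ex: pinch} identifies $X|_U$ with the hypersurface $\{v^2 - bu^2 = 0\}\subset V_Y(E)|_U = \Spec B[u,v]$.

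To pin down $\OO_{V_Y(E)}(X)$, observe that $\Pic V_Y(E) = q^*\Pic Y$ (since $q$ is a vector bundle), so $\OO_{V_Y(E)}(X) = q^*N$ for a unique $N\in\Pic Y$. Pushing the pushout short exact sequence $0\to\OO_X\to\OO_{\bar X}\oplus\OO_Y\to\OO_{\bar Y}\to 0$ to $Y$ via $q$ yields $q_*\OO_X = \OO_Y\oplus\bigoplus_{k\geq 1} g_*M^{\otimes k}$, with quotient map $\sym E = q_*\OO_{V_Y(E)}\twoheadrightarrow q_*\OO_X$ equal to the identity in degree $0$ and to the natural multiplication map $\sym^k E\to g_*M^{\otimes k}$ in each degree $k\geq 1$ (surjective by the local calculation). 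Its kernel coincides with $q_*\mathcal{I}_X$ and is the graded ideal of $\sym E$ generated in degree $2$ by the line subbundle $K := \ker(\sym^2 E\to g_*M^{\otimes 2})$; comparing with the projection-formula identification $q_*\mathcal{I}_X = \sym E\otimes N^{-1}$ forces $N^{-1}\cong K$.

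Finally, taking determinants in $0\to K\to\sym^2 E\to g_*M^{\otimes 2}\to 0$ and using $\det\sym^2 E = (\det E)^{\otimes 3}$ for a rank-$2$ bundle, the standard formula $\det g_*\mathcal{N} = L^{-1}\otimes\mathrm{Nm}(\mathcal{N})$ for a flat degree-$2$ cover, and the identity $\det E = L^{-1}\otimes\mathrm{Nm}(M)$, a short calculation gives $\det g_*M^{\otimes 2} = L\otimes(\det E)^{\otimes 2}$, whence $K = L^{-1}\otimes\det E$ and $N = K^{-1} = L\otimes(\det E)^{-1}$, as claimed. The principal technical obstacle is the global identification of $q_*\mathcal{I}_X$ in the previous step, i.e.\ that the local equations $v^2 - bu^2$ patch to define an intrinsic line subbundle of $\sym^2 E$; this is automatic once we work directly with the intrinsic sheaf $K$, but it can alternatively be verified by a direct cocycle computation showing that under a change of trivialization $m\mapsto\mu m$ the expression $v^2 - bu^2$ scales by $\mathrm{Nm}(\mu)$, matching the transition cocycle of $L^{-1}\otimes\det E$.
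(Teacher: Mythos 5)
Your proof is correct, and while the construction of the embedding coincides with the paper's (both use the surjection $g^*E\to M$ to map $\bar X=V_{\bar Y}(M)$ into $V_Y(E)$, invoke the pushout property, and verify the closed-embedding claim by the same local computation with $\bar B=B[z]/(z^2-b)$ and the equation $v^2-bu^2$), your identification of the divisor class takes a genuinely different route. The paper views $X$ as the relative affine cone over the bisection $\tilde g(\bar Y)\subset\pp_Y(E)$ and computes the class $2h+p^*(L-\det E)$ of that bisection in Proposition~\ref{prop: embedding}(iii) via the restriction sequence and the identification $R^1p_*\OO_{\pp_Y(E)}(-2h)\cong(\det E)\inv$; the class of the cone is then $q^*(L-\det E)$. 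You instead stay entirely in $V_Y(E)$, describe $q_*\OO_X$ as the graded $\sym E$-algebra $\OO_Y\oplus\bigoplus_{k\ge1}g_*M^{\otimes k}$, identify $q_*\mathcal I_X$ as the ideal generated by the degree-two kernel $K=\ker(\sym^2E\to g_*M^{\otimes2})$, and compute $K=L\inv\otimes\det E$ by determinants using $\det g_*\mathcal N=L\inv\otimes\mathrm{Nm}(\mathcal N)$. Your approach avoids the projective bundle and Proposition~\ref{prop: embedding}(iii) altogether and makes the ideal sheaf of $X$ intrinsic, which is arguably cleaner; the paper's approach buys the statement about $\tilde g(\bar Y)\subset\pp_Y(E)$, which it records as an independent result. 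Two small remarks: the appeal to $\Pic V_Y(E)=q^*\Pic Y$ is both slightly delicate for non-normal $Y$ and unnecessary, since your multiplication map $K\otimes_{\OO_Y}\sym E\to q_*\mathcal I_X$ is already an isomorphism by the local computation, giving $\mathcal I_X\cong q^*K$ directly; and you should state explicitly that this map is an isomorphism (not merely a surjection between the stated modules), which is exactly what the local principality of the ideal provides.
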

 We note the following consequence of the above theorem, which is useful in computations:
  \begin{cor}\label{cor: utile}
 $$g^*\OO_{V_Y(E)}(X)=\OO_Y(g^*D)\otimes M\inv\otimes \iota^*M\inv.$$
 \end{cor}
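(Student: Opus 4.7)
The plan is to deduce the corollary directly from Theorem \ref{prop: normal-bundle} by pulling back to $\bar Y$ and then applying two standard formulas for flat double covers: the computation of the determinant of the direct image of a line bundle, and the compatibility of the norm with base change along $g$.

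First, observe that by Theorem \ref{prop: normal-bundle} we have
\[
\OO_{V_Y(E)}(X)=q^*(L\otimes(\det E)\inv),
\]
and the composition $\bar Y \hookrightarrow \bar X = V_{\bar Y}(M)\to V_Y(E) \xrightarrow{q} Y$ (where the middle arrow is the natural map induced by $M \to g^*g_*M$ and the first is the zero section) equals $g\colon \bar Y\to Y$. Pulling back along this composition therefore yields
\[
g^*\OO_{V_Y(E)}(X)\;=\;g^*L\otimes g^*(\det E)\inv.
\]

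Next, I would compute the two factors separately using the double cover data from \S \ref{ssec: double-def}. Since $D\in|2L|$ is the branch divisor and $g^*D=2R$ with $\OO_{\bar Y}(R)\cong g^*L$, squaring gives
\[
(g^*L)^2\;\cong\;\OO_{\bar Y}(g^*D).
\]
For the determinant of $E=g_*M$, the relevant formula is
\[
\det g_*M \;\cong\; \Norm_{\bar Y/Y}(M)\otimes L\inv,
\]
which can be checked locally by trivializing both $L$ (so that $g$ is given by $z^2-b=0$) and $M$ (possible after shrinking by Lemma \ref{lem: easy}); the trace splitting $g_*\OO_{\bar Y}=\OO_Y\oplus L\inv$ gives the $L\inv$ factor, and comparing with the case $M=\OO_{\bar Y}$ pins down the correction $\Norm(M)$. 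Combined with $g^*\Norm_{\bar Y/Y}(M)\cong M\otimes\iota^*M$ (which is the defining property of the norm for a degree-$2$ Galois cover), this gives
\[
g^*\det E\;\cong\; M\otimes\iota^*M\otimes g^*L\inv.
\]

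Putting the two computations together,
\[
g^*L\otimes g^*(\det E)\inv\;\cong\;(g^*L)^2\otimes M\inv\otimes\iota^*M\inv\;\cong\;\OO_{\bar Y}(g^*D)\otimes M\inv\otimes\iota^*M\inv,
\]
which is the desired formula (the statement's $\OO_Y(g^*D)$ being a minor typo for $\OO_{\bar Y}(g^*D)$, since $g^*D$ is a divisor on $\bar Y$). The only step that is not immediately tautological is the formula for $\det g_*M$; this is the main (but standard) technical ingredient, and the local verification using the trivializations afforded by Lemma \ref{lem: easy} is the place where care is required.
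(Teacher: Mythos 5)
Your proposal is correct and follows the same overall route as the paper: deduce everything from Theorem \ref{prop: normal-bundle}, reduce to a formula for $g^*(\det E)$, and use $D\sim 2L$ (so $(g^*L)^{\otimes 2}\cong\OO_{\bar Y}(g^*D)$) to finish; your reading of $\OO_Y(g^*D)$ as $\OO_{\bar Y}(g^*D)$ also matches the paper's intent, since the sheaf lives on $\bar Y$. The only genuine difference is how the determinant formula is obtained. You first prove a statement on $Y$, namely $\det g_*M\cong \mathrm{Nm}_{\bar Y/Y}(M)\otimes L\inv$, checked by local trivialization, and then pull back using $g^*\mathrm{Nm}(M)\cong M\otimes\iota^*M$. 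The paper instead works directly on $\bar Y$ (its Lemma \ref{lem: determinant}): it exhibits the exact sequence $0\to g^*E\to M\oplus\iota^*M\to M|_R\to 0$, with the second map the difference of restrictions to the ramification divisor $R$, and takes determinants, using $\OO_{\bar Y}(R)\cong g^*L$. Both land on the identical identity $g^*(\det E)\cong M\otimes\iota^*M\otimes g^*L\inv$ and both ultimately rest on a local verification (via Lemma \ref{lem: easy}). Your route buys the slightly stronger statement that $\det E$ itself descends as $\mathrm{Nm}(M)\otimes L\inv$ on $Y$, at the cost of invoking the norm and its compatibility with pullback; the paper's exact-sequence argument is more self-contained and avoids introducing the norm. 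Either way the argument is complete.
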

 \begin{proof}
Theorem  \ref{prop: normal-bundle} gives $g^*\OO_{V_Y(E)}(X)=g^*(L \otimes ( \det E)\inv)$, so the claim follows by Lemma \ref{lem: determinant} below, recalling that $D$ is linearly equivalent to $2L$.
 \end{proof}
 \begin{lem}\label{lem: determinant}
 One has:
 $$g^*(\det E)=M\otimes\iota^*M\otimes g^*L\inv.$$
 \end{lem}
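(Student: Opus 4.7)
The identity is local on $Y$, so I would work Zariski-locally and glue. By Lemma~\ref{lem: easy} I can pick an affine open cover $\{U_i\}$ of $Y$ on which $L$ is trivial \emph{and} $M|_{g^{-1}(U_i)}$ is trivial (first shrink to trivialize $L$, then shrink further via Lemma~\ref{lem: easy} to trivialize $M$ upstairs). On each $U_i$ a trivialization $\phi_i\colon M|_{g^{-1}(U_i)}\xrightarrow{\sim}\OO_{g^{-1}(U_i)}$ pushes forward to an isomorphism of $\OO_{U_i}$-modules $g_*M|_{U_i}\cong g_*\OO_{\bar Y}|_{U_i}=\OO_{U_i}\oplus L^{-1}|_{U_i}$, whence $\det E|_{U_i}\cong L^{-1}|_{U_i}$.

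The key computation is the behavior on overlaps. Write $U_{ij}:=U_i\cap U_j$, set $u_{ij}:=\phi_i\phi_j^{-1}\in \OO_{\bar Y}^*(g^{-1}(U_{ij}))$, and use the basis $\{1,z\}$ of $g_*\OO_{\bar Y}|_{U_{ij}}$ in which $\bar Y$ is cut out of $U_{ij}\times\A^1_z$ by $z^2-b=0$ (so $b$ is the local expression of the section $s\in H^0(Y,L^2)$ defining $g$, cf.~\S\ref{ssec: double-def}). Writing $u_{ij}=a+cz$ I compute directly that multiplication by $u_{ij}$ on $\OO_{U_{ij}}\oplus \OO_{U_{ij}}z$ has matrix $\begin{pmatrix} a & cb \\ c & a\end{pmatrix}$ and hence determinant
\[
a^2-c^2b=(a+cz)(a-cz)=u_{ij}\cdot \iota^*(u_{ij}).
\]
These products are $\iota$-invariant units, so they descend to units of $\OO_Y(U_{ij})$ and define a line bundle $N$ on $Y$ with $\det E\otimes L\cong N$.

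To conclude, I pull $N$ back to $\bar Y$: by construction its transition functions, viewed as units on $g^{-1}(U_{ij})$, are $u_{ij}\cdot\iota^*(u_{ij})$, which factor into the transition functions $u_{ij}$ of $M$ times the transition functions $\iota^*(u_{ij})$ of $\iota^*M$. Hence $g^*N=M\otimes\iota^*M$, and rearranging yields $g^*(\det E)=M\otimes\iota^*M\otimes g^*L^{-1}$, as claimed.

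\textbf{Main obstacle.} The only non-formal ingredient is the norm identity $\det(\text{mult.\ by }u)=u\cdot\iota^*(u)$ on the $\OO_Y$-algebra $g_*\OO_{\bar Y}$; once this explicit matrix computation is in hand, everything else is bookkeeping with transition cocycles. A minor bookkeeping point is making sure one can simultaneously trivialize $L$ and $M|_{g^{-1}(U_i)}$, but this is exactly what Lemma~\ref{lem: easy} (together with shrinking) provides.
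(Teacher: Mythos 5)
Your proof is correct, but it takes a genuinely different route from the paper. The paper argues globally on $\bar Y$: using $E=g_*M\cong g_*(\iota^*M)$ it assembles the exact sequence \eqref{eq: det}, $0\to g^*E\to M\oplus\iota^*M\to M|_R\to 0$ (exactness being the local check), and then simply takes determinants, using $\OO_{\bar Y}(R)\cong g^*L$ to account for the torsion quotient. You instead stay on $Y$ and compute $\det E$ by transition cocycles: after trivializing $M$ on preimages of an affine cover (Lemma \ref{lem: easy}), the transition automorphisms of $E\cong g_*\OO_{\bar Y}$ are multiplication by units $u_{ij}\in\OO_{\bar Y}^*(g^{-1}(U_{ij}))$, whose determinants are the norms $u_{ij}\cdot\iota^* u_{ij}$; your matrix computation of this norm is the correct non-formal step, and the multiplicativity of the norm gives the cocycle condition for your bundle $N$. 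Your version actually proves slightly more: it exhibits $\det E\otimes L$ as a line bundle $N$ on $Y$ (the norm of $M$ along $g$) whose pullback is $M\otimes\iota^*M$, i.e., a descent of $M\otimes\iota^*M$ to $Y$, whereas the lemma only asserts the identity after pulling back by $g$. What the paper's choice buys is economy elsewhere: the sequence \eqref{eq: det} is not a throwaway device but is reused (restricted to $R$) in the proof of Proposition \ref{prop: ideal}, so establishing it once serves two purposes.
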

 \begin{proof}
 Since $g\circ \iota=g$ and $\iota^2=1$, there is natural isomorphism $$E=g_*M\cong g_*(\iota_*M)=g_*(\iota^*M).$$
So we have a short sequence
 \begin{equation}\label{eq: det}
 0\to g^*E\overset{\alpha}{\to} M\oplus \iota^*M\overset{\beta}{\to}  M|_R\to 0
 \end{equation}
 where $R$ is the ramification divisor of $g$,  the components of $\alpha$ are  the natural maps $E\to M$ and $E\to\iota^*M$ and $\beta(x, y)=x|_R-y|_R$.  A local computation (cf.~Lemma \ref{lem: easy}) shows that \eqref{eq: det} is actually exact. Since $\OO_{\bar Y}(R)=g^* L$ (see \S \ref{ssec: double-def}), we have $g^*(\det E)=\det g^*(E)=M\otimes\iota^*M\otimes\det (M|_R)\inv = M\otimes\iota^*M\otimes g^*L\inv$.
  \end{proof}
\smallskip

 \begin{proof}[Proof of Thm. \ref{prop: normal-bundle}]
 Let $\pi\colon V_Y(E)\setminus Y\to \pp_Y(E)$  be the projection. Let $\tilde g\colon \bar Y\to \pp_Y(E)$ be the closed embedding defined in Proposition \ref{prop: embedding} and let  $Z\subset V_Y(E)$ be the closure of $\pi^*(\tilde g(\bar Y))$ (in other words, $Z$ is the relative affine cone over $\tilde g(\bar Y)\subset \pp(E)$). 
 By Proposition  \ref{prop: embedding}, (iii),  the hypersurface  $Z$  is  in $|q^*(L-\det E)|$. 

 We wish to show that there is an isomorphism $X\to Z$. As a first step we define a map $\Psi\colon X\to Z$ using the universal property of pushout schemes, as follows.
 
 The inclusion $i_Y\colon Y\to V_Y(E)$ induces  a closed embedding $j\colon Y\to Z$. The surjection $g^*E\to M$ (cf.~Proposition \ref{prop: embedding} (i)) induces a morphism  $\Phi\colon \bar X=V_{\bar Y}(M)\to V_Y(E)$ whose restriction to the fibers of $V_{\bar Y}(M)$ is linear and injective; restricting $\Phi$  to $\bar X\setminus \bar Y\to  V_Y(E)\setminus Y$ we obtain a commutative diagram
 \[\begin{CD}
\bar X \setminus \bar Y @>{\Phi|_{\bar X\setminus \bar Y}}>>V_Y(E)\setminus Y\\
@V VV @VqVV\\
\bar Y@>>{\wt g}>\pp_Y(E)
\end{CD}
\]
that shows that $\Phi( \bar X)=Z$.  Since $\Phi|_{\bar Y}=j\circ g$, by the universal property of $X$  there is a unique map $\Psi\colon X\to Z$  induced by $(\Phi,i_Y)$. 
Since  the underlying set of $X$ is  the pushout in the category of sets   (cf.~\cite[Scolie 4.3]{Ferrand}),  it is immediate to check that $\Psi$ is a bijection.  

We prove that $\Psi$ is an isomorphism by using a local argument.
 
By Lemma \ref{lem: easy}  we  may replace  $Y$ by an affine open subset $U$ such that $M|_U$ and $L|_U$ are trivial,  $\bar Y$ by $\bar U:=g\inv(U)$, and $\bar X$ by $\bar U\times \A^1$. We have $U=\Spec B$, $\bar U=\Spec \bar B$,  where  $\bar B=B\oplus Bz$ is a free $B$-module of rank two and  $z^2=b\in B$.
The map $\Phi$ defined above restricts to the map $\bar U\times \A^1_t\to U\times \A^2_{u,v}$ defined by $(x, t)\mapsto (g(x),zt, t)$ with image $W:=\{u^2-bv^2=0\}\subset  U\times \A^2_{u,v}$. The affine variety $T$ obtained by pinching  $\bar U\times \A^1$  along $\bar U\times \{0\}$ via $g|_{\bar U}$ is an open subvariety of $X$; 
to prove  that $\Phi$ induces an isomorphism $V\to W$, it suffices to show that the following diagram is cocartesian: 

 \[\begin{CD}
\bar U@>>>U \\
@V VV @VVV\\
\bar U\times \A^1 @>>>W
\end{CD}
\]
In turn, this is the same as showing that the dual diagram 
 \begin{equation}\label{eq: diag2}
 \begin{CD}
B[u,v]/(u^2-bv^2)@>\psi>>\bar B[t] \\
@V p VV @VV{\bar p}V\\
B @>>> \bar B
\end{CD}
\end{equation}
is cartesian. The maps $p$ and $\bar p$ are surjective, so by \cite[Lemme 1.2]{Ferrand} diagram \eqref{eq: diag2} is cartesian iff $\psi$ induces an isomorphism   $\ker p\to \ker \bar p$. It is easy to see that $\psi$ is injective. Since $\ker \bar p$ is generated  as a $B$-module by the monomials $t^i$ and $zt^i$ with $i\ge 1$, the map $\ker p\to \ker \bar p$ is also surjective.
\end{proof}\label{prop: facile}

Assume now that  $Y$ and $\bar Y$ are smooth.  By Theorem \ref{prop: semi-smooth-glue} the pushout scheme $X$ is a semi-smooth variety and the set of pinch points of $X$ coincides with the branch locus $D\subset Y$ of the flat double cover $g\colon \bar Y\to Y$. We have seen (Lemma \ref{lem: IXsing}) that for  a semi-smooth variety the ideal $I_{Y|X\sing}$ is an invertible sheaf on $D$.  In the situation we are considering   it is possible  to determine $I_{Y|X\sing}$  explicitly:
\begin{prop} \label{prop: ideal} In the above setup, assume that $Y$ and $\bar Y$ are smooth and denote by $r\colon R\to D$ the isomorphism induced by $g$. Then:
$$I_{Y|X\sing}\cong r_*(M|_R).$$

\end{prop}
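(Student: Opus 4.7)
The plan is to construct a natural map $I_{Y|X\sing}\to r_*(M|_R)$ using the embedding of $X$ as a hypersurface in $V_Y(E)$ provided by Theorem~\ref{prop: normal-bundle}, and then verify that it is an isomorphism by a local check.

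The key input is that $Y\subset V_Y(E)$ is the zero section of the rank-$2$ bundle $E=g_*M$, so the conormal sheaf of $Y$ in $V_Y(E)$ is canonically $I_Y/I_Y^2\cong E=g_*M$. On the other hand, the restriction $\OO_{\bar Y}\twoheadrightarrow\OO_R$ induces a surjection $M\twoheadrightarrow M|_R$ and, pushing forward along $g$ (using that $g|_R$ factors as $R\xrightarrow{r}D\hookrightarrow Y$), a surjection $g_*M\twoheadrightarrow r_*(M|_R)$. Composing with the quotient $I_Y\twoheadrightarrow I_Y/I_Y^2=g_*M$ yields a canonical $\OO_X$-linear map $\Psi\colon I_Y\to r_*(M|_R)$.

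Next I would show that $\Psi$ vanishes on $I_{X\sing}\subseteq I_Y$, so that it factors through a morphism $\bar\Psi\colon I_{Y|X\sing}\to r_*(M|_R)$. This is a local statement, so I trivialize $L$ and $M$ (Lemma~\ref{lem: easy}) and use the local model underlying Theorem~\ref{prop: normal-bundle}: writing $g_*\OO_{\bar Y}=\OO_Y\oplus\OO_Y\cdot z$ with $z^2=b$ ($b$ a local equation for $D$) and $(u,v):=(zm,m)$, one has $X=\{u^2-bv^2=0\}$ in $V_Y(E)\cong Y\times\A^2_{u,v}$, so (since $\car\K\neq 2$) the Jacobian ideal $I_{X\sing}\subset\OO_X$ is generated by $u$, $bv$ and $v^2\,\partial b/\partial x_i$. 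Then $\Psi(u)=0$ because $u=zm$ maps to $0$ in $r_*(M|_R)$; $\Psi(bv)=b\cdot m|_R=0$ because $b$ vanishes on $D$; and $v^2\,\partial b/\partial x_i\in I_Y^2$, hence maps to zero in $E=I_Y/I_Y^2$. Thus $\Psi(I_{X\sing})=0$ and $\bar\Psi$ is well defined.

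Finally I would check that $\bar\Psi$ is an isomorphism. By Lemma~\ref{lem: IXsing} the source is an invertible $\OO_D$-module and the target is visibly invertible on $D$, so it suffices to verify that $\bar\Psi$ is nonzero at each point of $D$. In the local model $\bar\Psi$ sends the class $\bar v\in I_{Y|X\sing}$ to the generator $m|_R$ of $r_*(M|_R)$, so it suffices to show that $\bar v$ generates $I_Y/I_{X\sing}$ locally, i.e.\ that the $\OO_X$-module $(u,v)/(u,bv,v^2\,\partial b/\partial x_i)$ is cyclic on $\bar v$. The main technical step, and the expected principal obstacle, is the annihilator computation $\Ann_{\OO_X}(\bar v)=(u,b,v\,\partial b/\partial x_i)$: this reduces, via a lifting argument in $\OO_Y[u,v]$ that expands the relation $\phi\cdot v\in I_{X\sing}$ in powers of $u$ modulo $u^2-bv^2$, to the identity $(b,\partial b/\partial x_1,\dots,\partial b/\partial x_{n-1})=\OO_Y$ in a neighborhood of $D$, which is a restatement of the smoothness of $\bar Y$.
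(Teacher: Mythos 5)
Your argument is correct and follows essentially the same route as the paper's proof: both rest on the embedding $X\into V_Y(E)$ from Theorem~\ref{prop: normal-bundle}, the Koszul identification of $E$ with the conormal module of $Y$, the surjection $E=g_*M\onto r_*(M|_R)$ extracted from the sequence \eqref{eq: det}, and the same local model $\{u^2-bv^2=0\}$ (the paper runs the comparison in the opposite direction, producing a surjection $r_*(M|_R)\onto I_{Y|X\sing}$ by checking $\ker\phi\subseteq\ker\psi$, whereas you produce $I_{Y|X\sing}\onto r_*(M|_R)$ by checking $\Psi(I_{X\sing})=0$; since both sheaves are invertible on $D$, either surjection is automatically an isomorphism). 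The only substantive remark is that your concluding annihilator computation is superfluous: once $\bar\Psi(\bar v)=m|_R$ is a local generator of the target, $\bar\Psi$ is surjective and hence an isomorphism of line bundles on $D$, so you do not need $\bar v$ to generate the source --- that is exactly the content of Lemma~\ref{lem: IXsing}, which you have already invoked.
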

\begin{proof} The first step of the Koszul resolution for the ideal   of $I_Y$ of $Y$ in $V_Y(E)$ is a surjection $q^*E\onto I_Y$. 
Since  $I_{Y|X\sing}$ is supported on $D$, restriction to $D$ gives  a surjection $\psi\colon E|_D\onto I_{Y|X\sing}$.  
On the other hand, restricting \eqref{eq: det} to $R$ and pushing down to $D$ we obtain an exact sequence:
\[
E|_D\to r_*(M|_R)\oplus r_*(M|_R)\overset{\beta}{\to}  r_*(M|_R)\to 0
\]
where $\beta(s_1,s_2)=s_1-s_2$. So we have a surjection $\phi\colon E|_D\onto \ker \beta=r_*(M|_R)$. 
Since $I_{Y|X\sing}$  is an invertible sheaf on $D$ by Lemma \ref{lem: IXsing}, to prove our claim it is enough to show that $\ker \phi\subseteq \ker \psi$.

We show this inclusion by means a local computation, arguing as in the last part of the proof of Theorem \ref{prop: normal-bundle} and using the same notation. We work in a neighbourhood $U=\Spec B$ of a point $p\in D\subset Y$, so that  $u(p)=v(p)=b(p)=0$ and $b$ is a coordinate on $U$. We  assume that $M$ is trivial on $\bar U:=g\inv(U)=\Spec \bar B$, where $\bar B=B\oplus Bz$, with $z$ an antiinvariant function such that $z^2=b$.  If we take as $e_1:=z$ and $e_2:=1$ as a local basis of $E$ and 1 as a local generator for $M$, then  the map $g^*E\to M\oplus \iota_*M$  is given locally by $e_1\mapsto (z,-z)$ and $e_2\mapsto (1,1)$, so the kernel of $\phi$ is spanned by $e_1$. 

We let $u,v$ be the coordinates on $q^*E$ dual to the local basis $e_1,e_2$:  on  $U\times \A^2_{u,v}$ the map $q^*E\onto I_Y\subset B[u,v]$ can be written locally as $ B[u,v]e_1 \oplus B[u,v]e_2  \overset{(u,v)}{\to} B[u,v] $. As in the proof  of Theorem \ref{prop: normal-bundle}   the pushout scheme $X$ is defined inside $U\times \A^2_{u,v}$
 and  $X\sing =\Spec B[v, u]/(u, v^2, bv)$, so the above map, when restricted to $X\sing$, sends $e_1$ to zero. A fortiori $e_1$ is in the kernel of $\psi$, as required. 
\end{proof}

\section{Computing $T_X$ and  $\mathcal T^1_X$ of a  semi-smooth variety}\label{sec: def}

In this section  $X$  is a semi-smooth variety over $\K$ (cf.~\S \ref{sec: semi-smooth}).  We use freely the notation of \S \ref{sec: semi-smooth} and \S \ref{ssec: double-def}; given a
  sheaf $\mathcal F$  on $\bar Y$ and a linearization of $\mathcal F$ with respect to $\iota$, we denote by  $(g_*\mathcal F)^{\rm inv}$ the invariant subsheaf of $g_*\mathcal F$.

\subsection{The tangent sheaf of a semi-smooth variety}\label{ssec: tg}

Here we describe the tangent sheaf $T_X$ in terms of the normalization map $f\colon \bar X\to X$. Our results are summarized in the following:

\begin{thm} \label{prop: tg}
Let $X$ be a semi-smooth variety,  let $f\colon \bar X\to X$ be the normalization map, let $\bar Y \subset \bar X$ and 
$Y\subset X$ the subschemes defined by the conductor  (cf. \S \ref{ssec: A2}) and let $g\colon \bar Y\to Y$ the degree 2 map induced by $f$. Then:

\begin{enumerate}
\item there is a natural injective map $\alpha\colon  T_X\to f_*T_{\bar X}$ which is an isomorphism on  the smooth locus of $X$;

\item set $\mathcal G:=\coker \alpha$;  then $\alpha$ induces   an exact sequence
$$0\to (g_*T_{\bar  Y})^{\rm inv}\to g_*T_{\bar  X}|_{\bar  Y}\to \mathcal G\to 0.$$
\end{enumerate}
\end{thm}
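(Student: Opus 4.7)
The plan is to construct the map $\alpha$ globally using the normalization property of $f$ and then identify its image by a computation that can be carried out \'etale-locally on the two standard local models of semi-smooth singularities.

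To construct $\alpha$, I would use that $f\colon \bar X \to X$ is finite and birational (being the normalization), so every $\K$-derivation $D$ of $\OO_X$ extends uniquely to a $\K$-derivation $\bar D$ of $\OO_{\bar X}$: the extension is forced by the Leibniz rule applied to integral equations, and regularity of the coefficients of $\bar D$ can be checked on each local model, where $\OO_X$ is described explicitly as a subring of $\OO_{\bar X}$. Setting $\alpha(D):=\bar D$ produces a natural morphism $\alpha\colon T_X \to f_*T_{\bar X}$. Injectivity is immediate since $D=\bar D|_{\OO_X}$, and because $f$ restricts to an isomorphism on the smooth locus of $X$, so does $\alpha$; this gives part (i).

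For part (ii), the central claim I would establish is the following description of the image: a local section $\bar D\in f_*T_{\bar X}$ lies in $\alpha(T_X)$ if and only if $\bar D|_{\bar Y}$, viewed inside $g_*(T_{\bar X}|_{\bar Y})$, lies in the subsheaf $(g_*T_{\bar Y})^{\rm inv}$, where this inclusion is induced by $T_{\bar Y}\hookrightarrow T_{\bar X}|_{\bar Y}$. Equivalently, $\bar D$ must be tangent to $\bar Y$ along $\bar Y$, and the resulting tangential restriction must be $\iota$-invariant. Granting this characterization, the short exact sequence
$$0 \to T_X \xrightarrow{\alpha} f_*T_{\bar X} \to g_*(T_{\bar X}|_{\bar Y})/(g_*T_{\bar Y})^{\rm inv} \to 0$$
on $X$ identifies $\mathcal G$ with the cokernel, which in turn yields the claimed short exact sequence in (ii).

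The main obstacle is the verification of this image characterization, which is \'etale-local, so by Definition \ref{def: semi-smooth} and Proposition \ref{prop: semi-smooth-glue} it reduces to two cases: the pinch point $P_n$ and the double crossing $\Spec\K[u,v,t]/(uv)\times \A^{n-2}$. In the pinch point case, using Remark \ref{rem: pinch} to identify $\OO_X$ with the subring of $\OO_{\bar X}=\K[x,y,t_1,\dots,t_{n-2}]$ consisting of those $\phi$ with $\phi|_{\bar Y}$ $\iota$-invariant, I would write $\bar D = a\partial_x + b\partial_y + \sum c_i\partial_{t_i}$ and check that $\bar D$ preserves $\OO_X$ iff $a|_{\bar Y}$ is $\iota$-antiinvariant, $b|_{\bar Y}=0$, and each $c_i|_{\bar Y}$ is $\iota$-invariant (the conditions coming from $v=y$ and $u=xy$ conspire to force $b|_{\bar Y}=0$); since $\iota_*\partial_x=-\partial_x$ on $\bar Y$, these are precisely the conditions for $\bar D|_{\bar Y}$ to land in $(g_*T_{\bar Y})^{\rm inv}$. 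The double crossing case is a simpler parallel computation, where $g$ is \'etale and $\iota$ swaps the two sheets of $\bar X$; the condition becomes that $\bar D$ be tangent to $\bar Y$ on each sheet and the tangential components match across sheets, i.e., are $\iota$-invariant.
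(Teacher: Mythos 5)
Your proposal is correct and follows essentially the same route as the paper: a globally defined natural map $\alpha$ whose image is then identified by explicit computation on the standard \'etale-local models, with your ``tangent to $\bar Y$ and $\iota$-invariant on restriction'' characterization of $\alpha(T_X)$ being exactly the content of the paper's Lemma \ref{lem: G}. The only minor difference is in part (i): the paper constructs $\alpha$ by dualizing $f^*\Omega_X\to\Omega_{\bar X}$ and uses an $S_2$ argument so that only the double-crossing model needs checking there (pinch points being in codimension $2$), whereas you extend derivations directly and verify regularity on both local models.
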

\medskip

The rest of the section is devoted to proving Theorem  \ref{prop: tg}. To simplify the notation, we write down  the proof in the two-dimensional case; the arguments in the higher dimensional case are exactly the same. 

Dualizing the natural map $f^*\Omega_X\to \Omega_{\bar X}$ we obtain  a natural injective map $j\colon T_{\bar  X}\to (f^*\Omega_X)^{\vee}$.
\begin{lem}\label{lem: iso-tg}
The map  $j\colon T_{\bar X}\to (f^*\Omega_X)^{\vee}$ is an isomorphism. 
\end{lem}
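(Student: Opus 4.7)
The plan is to reduce to an étale-local calculation at each of the standard models of a semi-smooth variety. First I would note that $j$ is a morphism of coherent sheaves on the smooth variety $\bar X$ with locally free source $T_{\bar X}$, so being an isomorphism is a local property on $\bar X$. Moreover, since normalization commutes with étale base change and the formation of $\Omega$, $T$, and of the canonical map $j$ all commute with étale base change, by Definition~\ref{def: semi-smooth} one may reduce to verifying the claim in the three standard étale-local models for $X$: a smooth point, a double crossing point, or a pinch point.

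The smooth case is immediate: there $f$ is an isomorphism, hence so is the natural map $f^*\Omega_X \to \Omega_{\bar X}$, and therefore so is its dual $j$. In the double crossing case one may take $X = \Spec \K[u,v,t_1,\ldots,t_{n-1}]/(uv)$ and $\bar X = \A^n$ the branch $\{v = 0\}$; the pulled-back relation $d(uv) = u\,dv + v\,du = 0$ becomes $x\,dv = 0$, so $dv$ becomes torsion in $f^*\Omega_X$. Dualizing over the domain $\bar A = \K[x,t_1,\ldots,t_{n-1}]$ kills this torsion, leaving a free module of rank $n$ whose basis dual to $du, dt_1, \ldots, dt_{n-1}$ is precisely the image under $j$ of $\partial_x, \partial_{t_1}, \ldots, \partial_{t_{n-1}}$.

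The essential case is the pinch point: take $X = \Spec \K[u,v,w,t_1,\ldots,t_{n-2}]/(u^2 - v^2 w)$ and $\bar X = \Spec \K[x,y,t_1,\ldots,t_{n-2}]$, with $f$ given by $u = xy$, $v = y$, $w = x^2$. Then $f^*\Omega_X$ admits a presentation by the single pulled-back relation $2xy\,du - 2x^2 y\,dv - y^2\,dw = 0$. I would then compute $(f^*\Omega_X)^\vee$ as the submodule of the dual free module defined by this relation: since $\bar A$ is a domain and $\mathrm{char}\,\K\ne 2$, the relation is equivalent to $2xa - 2x^2 b - yc = 0$, and since $\gcd(y, 2x) = 1$ in $\bar A$ this forces $a = xb + yd$ and $c = 2xd$ for some $b, d \in \bar A$. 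Hence $(f^*\Omega_X)^\vee$ is free of rank $n$, and a direct check shows that the images of $\partial_y, \partial_x, \partial_{t_i}$ under $j$ are, respectively, $(x,1,0,\mathbf{0})$, $(y,0,2x,\mathbf{0})$ and the standard basis vectors of the $t$-block; in particular they form a basis.

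The only real obstacle is the pinch-point computation, but once one writes down the single defining relation and dualizes, everything reduces to linear algebra over a polynomial ring.
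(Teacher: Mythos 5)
Your proof is correct, and the key computation checks out: over the pinch point the constraint $2xa-2x^2b-yc=0$ does have $\{(x,1,0),(y,0,2x)\}$ as a free basis of its solution module, and these are precisely $j(\partial/\partial y)$ and $j(\partial/\partial x)$. Your route differs from the paper's in one essential point, namely how the pinch point is handled. The paper likewise starts from the Jacobian presentation of $\Omega_X$, pulls back and dualizes, but then observes that $(f^*\Omega_X)^{\vee}$, being the kernel of a map of free sheaves, is $S_2$, as is the locally free sheaf $T_{\bar X}$; since the preimage in $\bar X$ of the pinch locus has codimension $2$ and a map of $S_2$ sheaves which is an isomorphism in codimension $1$ is an isomorphism, it suffices to check the claim at double crossing points, and the pinch-point computation is never performed. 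You instead attack the pinch point directly and settle it by divisibility in the polynomial ring. Your argument is more elementary and self-contained, and it exhibits explicit generators of $(f^*\Omega_X)^{\vee}$ over the pinch point; the paper's is shorter and, as the authors note in the remark following the lemma, extends verbatim to any locally hypersurface demi-normal variety, since it uses nothing about the pinch-point equation beyond the codimension of its locus. Two small points to tighten: in the double-crossing model the normalization of $\{uv=0\}$ is the disjoint union of the two branches rather than the single branch $\{v=0\}$ (harmless, since the check is local on $\bar X$ and the components are symmetric, but it should be said); and the reduction to \'etale-local models tacitly uses that normalization commutes with \'etale base change, which is exactly Lemma \ref{lem: base-change} of the paper and deserves a citation.
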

\begin{proof}
Set $\mathcal F:= (f^*\Omega_X)^{\vee}$.   If  locally in the \'etale   topology  $X$ is given by $\{h(u,v,w)=0\}\subset \A^3_{u,v,w}$, the exact  sequence of differentials \begin{equation}\label{seq: differentials}
0\to \OO_X(-X)=\OO_X\overset{^t(\frac{\partial h }{\partial u}, \frac{\partial h }{\partial v}, \frac{\partial h }{\partial w})}{\longrightarrow}\Omega_{\A^3|X}=\OO_X^{\oplus 3}\to \Omega_X\to 0
\end{equation} 
is exact. Pulling back to $\bar X$ and dualizing we obtain the following exact sequence on $\bar X$: 
\begin{equation}\label{eq: pres-F}
0\to \mathcal F\to \OO_{\bar  X}^{\oplus 3}\overset{(\frac{\partial h}{\partial u}, \frac{\partial h }{\partial v}, \frac{\partial h }{\partial w})}{\longrightarrow}  \OO_{\bar  X}.
\end{equation}
that  shows that $\mathcal F$ is $S_2$.

Denote by $R\subset \bar  X$ the preimage of the set of the pinch points  of $X$, which is a codimension 2 closed subset by assumption, and set $U:=\bar X\setminus R$. 
We are going to show that   $j$ restricts to an isomorphism on $U$. This will finish the proof, since a map of $S_2$ sheaves that is an isomorphism in codimension 1 is an isomorphism.

 Locally  in the \'etale  topology near a double crossings point of $X$ we may assume    $\bar X=\{z^2-1=0\}\subset \A^3_{x,y,z}$, $X=\{uv=0\}\subset \A^3_{u,v,w}$ and $(x,y, t)\overset{f}{\mapsto} ((z-1)y,(z+1)y, x)$. 
The map $\OO_{\bar  X}^{\oplus 3}\to  \OO_{\bar  X}$ of \eqref{eq: pres-F} is given by $((z+1)y, (z-1)y, 0)$, hence  $\mathcal F$ is locally generated by $(0,0,1)$ and $(z-1, z+1,0)$. Finally, $j$ maps $\frac{\partial }{\partial y}$ to $(z-1, z+1,0)$ and  $\frac{\partial }{\partial x}$ to $(0, 0,1)$.
\end{proof} 
\begin{rem} The proof of Lemma \ref{lem: iso-tg} works more generally for $X$ locally  hypersurface  and demi-normal. 
\end{rem}

\begin{proof}[Proof of Thm.  \ref{prop: tg}]
(i) Consider  the  natural map 
$$f^*T_X=f^*(\homc_{\OO_X}(\Omega_X, \OO_X))\to \homc_{\OO_{\bar  X}}(f^*\Omega_X, \OO_{\bar  X})=(f^*\Omega_X)^{\vee},$$ which is an isomorphism on $\bar X\setminus \bar Y$. 
Composing this map with  the isomorphism $j\inv \colon f^*(\Omega_X)^{\vee}\to T_{\bar X}$ (cf.~Lemma  \ref{lem: iso-tg}) we get a map $f^*T_X\to T_{\bar  X}$.  Pushing down to $X$ and composing with the natural map $T_X\to f_*(f^*T_X)$ gives   the  map $\alpha \colon T_X\to f_*T_{\bar  X}$, which is  an isomorphism  on $X\setminus Y$, and therefore is injective, since $T_X$ is torsion free. So  we have an exact sequence:
\begin{equation}\label{eq: tg-sequence}
0\to T_X\to f_*T_{\bar  X}\to \mathcal G \to 0,
\end{equation}
where $\mathcal G$ is supported on $Y$. 
\medskip

(ii) Follows from Lemma \ref{lem: G} below. 
\end{proof}
\begin{lem}\label{lem: G}
\begin{enumerate}
\item  The morphism $f_*T_{\bar  X}\to \mathcal G$ factors via $g_*T_{\bar  X}|_{\bar  Y}$;
\item The kernel of the induced map   $g_*T_{\bar  X}|_{\bar  Y}\to \mathcal G$ is $g_*(T_{\bar  Y})^{\text {inv}}$, the invariant part of $g_*(T_{\bar  Y})$.
\end{enumerate}
\end{lem}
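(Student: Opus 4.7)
The plan is to reduce both (i) and (ii) to an étale-local calculation in the pinch-point and double-crossing models from Example \ref{ex: pinch} (see also Remark \ref{rem: pinch}). Globally, the cartesian property of \eqref{eq: diag1} together with the identification $f_*\bar\jmath_* = j_*g_*$ yields a natural short exact sequence
$$
0 \to f_*(T_{\bar X}(-\bar Y)) \to f_*T_{\bar X} \to j_*g_*(T_{\bar X}|_{\bar Y}) \to 0.
$$
Statement (i) is equivalent to the inclusion $f_*(T_{\bar X}(-\bar Y)) \subseteq \im\alpha$; granting (i), the induced surjection $j_*g_*(T_{\bar X}|_{\bar Y}) \onto \mathcal G$ has kernel equal to the image of the composition $T_X \xrightarrow{\alpha} f_*T_{\bar X} \to j_*g_*(T_{\bar X}|_{\bar Y})$, and (ii) amounts to identifying this image with $j_*g_*(T_{\bar Y})^{\mathrm{inv}}$.

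In the pinch-point chart I take $\bar A = \K[x,y]$, $I_{\bar Y} = (y)$, $A = \bar A \times_{\bar B} B$ generated by $u = xy$, $v = y$, $w = x^2$, with $\iota(x) = -x$ on $\bar Y$. For (i), any $\delta \in T_{\bar X}(-\bar Y) = y\cdot T_{\bar X}$ has the form $y(a\partial_x + b\partial_y)$ and sends each of $u, v, w$ into $y\bar A$, so the values restrict to $0$ on $\bar Y$ and hence trivially lie in $B \subset \bar B$; thus $\delta$ preserves $A$. For (ii), a general $\delta = a\partial_x + b\partial_y \in T_{\bar X}$ descends iff $\delta(u), \delta(v), \delta(w) \in A$; since $B = \K[x^2] \subset \K[x] = \bar B$ selects even functions of $x$, these translate into the requirements that $b|_{\bar Y}$ be simultaneously odd (from the $bx$ summand in $\delta(u) = ay+bx$) and even (from $\delta(v) = b$) in $x$, forcing $b|_{\bar Y} = 0$, and that $a|_{\bar Y}$ be odd in $x$ (from $\delta(w) = 2xa$). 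The vanishing $b|_{\bar Y} = 0$ is precisely $\delta|_{\bar Y} \in T_{\bar Y}$, and since $\iota_*\partial_x = -\partial_x$ the oddness of $a|_{\bar Y}$ is precisely $\iota$-invariance of $\delta|_{\bar Y}$ inside $T_{\bar Y}$. The double-crossing case is parallel: with $\bar A = \K[x,y,z]/(z^2-1)$ and $\iota$ swapping the two sheets $z = \pm 1$, $\delta$ descends iff $b|_{\bar Y}=0$ and the two components of $a|_{\bar Y}$ agree, which again is exactly $\delta|_{\bar Y} \in g_*(T_{\bar Y})^{\mathrm{inv}}$.

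The main obstacle will be the sign bookkeeping at the pinch point: $b|_{\bar Y} = 0$ is forced by combining the conditions from $\delta(u)$ and $\delta(v)$ (neither alone suffices), and one has to remember that $\iota$-invariance in $T_{\bar Y}$ incorporates the $\iota$-action on tangent vectors, so that the coefficient of $\partial_x$ in an invariant vector field must be odd, rather than even, in $x$.
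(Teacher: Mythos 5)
Your proof is correct and follows essentially the same route as the paper's: the same reduction via the exact sequence $0\to f_*T_{\bar X}(-\bar Y)\to f_*T_{\bar X}\to g_*T_{\bar X}|_{\bar Y}\to 0$, followed by an explicit verification in the local models of Example \ref{ex: pinch}. The only cosmetic difference is that you characterize $\im\alpha$ as the derivations of $\bar A$ preserving the subring $A$ and read off parity/diagonal conditions on $a|_{\bar Y}$ and $b|_{\bar Y}$, whereas the paper lists Jacobian generators of $T_X$ and pushes them forward via the chain rule; the resulting submodules of $f_*T_{\bar X}$ agree.
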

\begin{proof} 
Since the map $f$ is finite, we have a short exact sequence:
$$0\to f_*T_{\bar X}(-\bar Y)\to f_*T_{\bar X}\to f_*T_{\bar X}|_{\bar Y}=  g_*T_{\bar X}|_{\bar Y}\to 0, $$
so to prove (i)  it is enough to show that the composition $f_*T_{\bar X}(-\bar Y)\to f_*T_{\bar X}\to \cG$ is 0;  then to prove (ii)  one needs to show that the sequence $0\to  g_*(T_{\bar  Y})^{\text {inv}}\to g_*T_{\bar X}|_{\bar Y}\to \cG\to 0$ is exact. Since $X$ is  semi-smooth,  it is enough to prove both statements in the situation of Example  \ref{ex: pinch}.

It is enough to consider the case 
   $\bar X=\A^2_{x,y}$, $X=\{u^2-v^2w=0\}\subset \A^3_{u,v,w}$ and $f\colon \bar X\to X$ defined by $(x,y)\mapsto(xy,y,x^2)$. So we have $Y=\{u=v=0\}$, $\bar Y=\{y=0\}$   and the tangent sheaf $T_X$ is the kernel of 
$T_{\A^3}|_{X}=\OO_X^{\oplus 3}\xlongrightarrow{(2u,-2vw, -v^2)}\OO_X=\OO_X(X)$.

  A set of generators of $T_X$ is given by:

\begin{gather*}
e_1:=vw\frac{\partial  }{\partial u}+u \frac{\partial  }{\partial v}, \quad  e_2:=u\frac{\partial  }{\partial u}+v \frac{\partial  }{\partial v},\\ e_3:= v^2 \frac{\partial  }{\partial u}+2u\frac{\partial}{\partial w}, \quad e_4:= v\frac{\partial  }{\partial v}-2w \frac{\partial  }{\partial w}.
\end{gather*} 
Since $f_*\OO_{\bar X}$ is generated by $1,x$ as an $\OO_X$-module, the sheaf $f_*T_{\bar X}$ is generated as an $\OO_X$-module by
 $$\frac{\partial  }{\partial x}, \frac{\partial  }{\partial y}, x\frac{\partial  }{\partial x}, x\frac{\partial  }{\partial y}.$$
The chain rule gives relations: 
$$\frac{\partial  }{\partial x}=y\frac{\partial  }{\partial u}+2x\frac{\partial  }{\partial w},\quad  \frac{\partial  }{\partial y}= x\frac{\partial  }{\partial u}+\frac{\partial  }{\partial v}.$$
Therefore we have:
\begin{gather*} \alpha(e_1)=xy\frac{\partial  }{\partial y},\quad \alpha(e_2)=y\frac{\partial  }{\partial y}, \\
\alpha(e_3)=y\frac{\partial  }{\partial x}, \quad \alpha(e_4)= y\frac{\partial  }{\partial y}-x\frac{\partial  }{\partial x}
\end{gather*}
and $\alpha(T_X)$ is the subsheaf generated by $u\frac{\partial  }{\partial y}, v\frac{\partial  }{\partial y}, v\frac{\partial  }{\partial x}, x\frac{\partial  }{\partial x}$.
The sheaf  $f_*T_{\bar X}(-\bar Y)$ is generated by  $  u\frac{\partial  }{\partial y}, v\frac{\partial  }{\partial y}, v\frac{\partial  }{\partial x}, u\frac{\partial  }{\partial x}=vx\frac{\partial  }{\partial x}$, so we see that $f_*T_X(-Y)\subset \alpha(T_X)$ and that the quotient sheaf $\alpha(T_X)/f_*T_X(-Y)$ is generated by $x\frac{\partial  }{\partial x}$, i.e.,  (i) and (ii) hold in this  case. 

\end{proof}

\subsection{The sheaf $\mathcal T^1_X$ for a semi-smooth variety}\label{ssec: T1}

As in  the previous section we assume that $X$ is  a semi-smooth variety with reduced singular locus $Y$, $f\colon\bar X\to X$ is  the normalization,  $\bar Y \subset \bar X$ and $Y\subset X$ are the subschemes defined by the conductor  (cf. \S \ref{ssec: A2}).  

\medskip
Observe   (see Def. \ref{def: SIng})  that    there is an exact sequence:
\begin{equation}\label{eq: T1}
0\to \mathcal T^1_X\otimes \mathcal I_{Y|X\sing}\to \mathcal T^1_X\to \mathcal T^1_X|_Y\to 0. 
\end{equation}

Let $N_{\bar Y|\bar X}=\OO_{\bar Y}(\bar Y)$ be  the normal bundle of $\bar Y$ in $\bar X$; the main result of this section is the explicit computation of the first and last term in \eqref{eq: T1}:

\begin{thm} \label{thm: computing-T1}
In the above set up   let  $D$ be  the branch locus of $g$ and let $L\inv$ be  the anti-invariant summand of $g_{\ast} \OO_{\bar Y} $.

Then we have the following isomorphisms of line bundles: 

\begin{enumerate}
 \item  on $Y$, $\cT^1_X|_Y\cong L\otimes (\det g_*(N_{\bar Y|\bar X}\inv ))\inv$;
\item on $\bar Y$, $g^*(\cT^1_X|_Y)\cong   g^*(L^{\otimes 2})\otimes N_{\bar Y|\bar X}\otimes \iota^*N_{\bar Y|\bar X}$;
\item  on  $D$, $ I_{Y|X\sing}\cong  r_* \left( (N_{\bar Y|\bar X}\inv)_{|R} \right) $.
\end{enumerate}
\end{thm}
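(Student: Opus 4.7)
The strategy is to deform the embedding $\bar\jmath\colon \bar Y\hookrightarrow \bar X$ to the normal cone, reducing the three statements to the explicit setting of \S \ref{sec: double}, and then invoke the base-change result of \S \ref{ssec: glue-ext}. I observe first that (ii) is an immediate corollary of (i): applying $g^*$ to the formula in (i) and substituting $g^*(\det g_*N^{-1}_{\bar Y|\bar X}) = N^{-1}_{\bar Y|\bar X}\otimes \iota^*N^{-1}_{\bar Y|\bar X}\otimes g^*L^{-1}$ from Lemma \ref{lem: determinant} (applied with $M = N^{-1}_{\bar Y|\bar X}$) yields exactly $g^*(\mathcal T^1_X|_Y)\cong g^*L^{\otimes 2}\otimes N_{\bar Y|\bar X}\otimes \iota^*N_{\bar Y|\bar X}$. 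Hence I focus on (i) and (iii).

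Let $\bar{\mathscr X}\to \A^1$ be the deformation to the normal cone of $\bar Y\subset \bar X$: it is flat over $\A^1$, with generic fiber $\bar X$ and special fiber $V_{\bar Y}(N^{-1}_{\bar Y|\bar X})$, and it contains $\bar Y\times \A^1$ as a closed subscheme restricting to $\bar Y$ in every fiber. Since $\bar{\mathscr X}$, $\bar Y\times \A^1$ and $Y\times \A^1$ are all flat over $\A^1$, Ferrand's construction (using $g\times \mathrm{id}$ for the gluing map) produces a flat pushout $\mathscr X := \bar{\mathscr X}\sqcup_{\bar Y\times \A^1}(Y\times \A^1)$; flatness can be read off the exact sequence $0\to \OO_{\mathscr X}\to \OO_{\bar{\mathscr X}}\oplus \OO_{Y\times \A^1}\to \OO_{\bar Y\times \A^1}\to 0$ coming from Remark \ref{rem: cartesian}. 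By Proposition \ref{prop: special_glue} the generic fiber of $\mathscr X\to \A^1$ is $X$ and the special fiber is the \S \ref{sec: double} pushout $X_0$ associated to the line bundle $M = N^{-1}_{\bar Y|\bar X}$; since $\bar X_0 = V_{\bar Y}(N^{-1}_{\bar Y|\bar X})$ satisfies $N_{\bar Y|\bar X_0} = N_{\bar Y|\bar X}$, the expressions on the right-hand sides of (i)--(iii) are intrinsic to the family and do not depend on $t$.

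The next step is to verify that $\mathscr X\to \A^1$ is a flat lci morphism of hypersurface type with generically smooth fibers, so that Theorem \ref{thm: restriction} and Corollary \ref{cor: sing_pullback} apply to each fiber inclusion $X_t\hookrightarrow \mathscr X$. This is étale-local on $\bar Y$: trivializing $N_{\bar Y|\bar X}$ makes $\bar{\mathscr X}$ étale-locally isomorphic to $\bar X\times \A^1$, hence $\mathscr X$ étale-locally isomorphic to $X\times \A^1$, which is clearly of the required type since $X$ is étale-locally $P_n$ (cf.~Remark \ref{rem: pinch}). Theorem \ref{thm: restriction} then gives $\mathcal T^1_{\mathscr X/\A^1}|_{X_t}\cong \mathcal T^1_{X_t}$ and Corollary \ref{cor: sing_pullback} gives $\mathscr X_{\sing}\cap X_t = (X_t)_{\sing}$ as closed subschemes, for every $t\in \A^1$.

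To conclude, consider the invertible sheaf $\mathcal L := \mathcal T^1_{\mathscr X/\A^1}|_{Y\times \A^1}$ on $Y\times \A^1$. Since $Y$ is smooth, $\Pic(Y\times \A^1) = \pr_Y^*\Pic(Y)$, so $\mathcal L$ is pulled back from a single line bundle $\mathcal L_0\in \Pic(Y)$; by the previous paragraph $\mathcal L_0\cong \mathcal T^1_{X_t}|_Y$ for every $t\in \A^1$. Computing at $t=0$ in the \S \ref{sec: double} setting: Theorem \ref{prop: normal-bundle} embeds $X_0$ as a divisor in $V_Y(E_0)$, with $E_0 := g_*N^{-1}_{\bar Y|\bar X}$, in the class $q^*(L\otimes (\det E_0)^{-1})$; Remark \ref{rem: T1-divisor} identifies $\mathcal T^1_{X_0}$ with the restriction of this line bundle to $X_{0,\sing}$, and pulling back by the zero section $j\colon Y\to V_Y(E_0)$ (so that $q\circ j = \mathrm{id}_Y$) gives $\mathcal T^1_{X_0}|_Y\cong L\otimes (\det g_*N^{-1}_{\bar Y|\bar X})^{-1}$, proving (i). An identical argument applied to $I_{Y\times \A^1|\mathscr X_{\sing}}|_{D\times \A^1}$, using that $D$ is a smooth divisor in $Y$ (hence $\Pic(D\times \A^1) = \pr_D^*\Pic(D)$) and Proposition \ref{prop: ideal} at $t=0$, yields (iii). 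The main anticipated obstacle is the étale-local identification needed to confirm the hypotheses of Theorem \ref{thm: restriction} for the family $\mathscr X\to \A^1$ — specifically, matching the pushout construction in families with $P_n\times \A^1$ near a pinch point — but once this local picture is pinned down the rest of the argument is formal.
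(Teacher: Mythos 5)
Your proposal is correct and follows essentially the same route as the paper: degeneration to the normal cone glued in families, the base-change result for $\mathcal T^1$ (Theorem \ref{thm: restriction} and Corollary \ref{cor: sing_pullback}), and the explicit computations of Theorem \ref{prop: normal-bundle} and Proposition \ref{prop: ideal} on the special fiber. The only (harmless) difference is that you deduce constancy of the relevant line bundles along $\A^1$ from $\Pic(Y\times\A^1)=\pr^*\Pic(Y)$ (and likewise for $D$), whereas the paper argues via separatedness of the Picard variety; both work.
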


 This is one of the key technical points of this paper: its proof combines the results of \S  \ref{ssec: glue-ext} and \S \ref{ssec: glue-bundle} with the degeneration to the normal cone (cf.~Proposition \ref{prop: degeneration}). 
\bigskip

Let $\bar X_0$ be the total space of the normal bundle of $\bar Y$ in $\bar X$; let $X_0$ be the semi-smooth variety obtained by pinching $\bar X_0$ along $\bar Y$ via $g\colon \bar Y\to Y$.

\begin{prop}\label{prop: degeneration} We can construct a degeneration  of $X$ to $X_0$, i.e. a cartesian diagram 
\begin{equation}\label{eq: diag3}
\begin{CD}
Y @>>> Y\times \mathbb A^1 @<<< Y\times U\\
@VVV @VVV @VVV\\
X_0 @>>>\mathcal X @<<< X\times U\\
@V{q_0}VV @VqVV @VV{q_U}V\\
{0}@>>>\mathbb A^1 @<<< U
\end{CD}
\end{equation}
where $U:=\mathbb A^1\setminus\{0\}$, $\mathcal X$ is a semi-smooth variety,  $Y\times \mathbb A^1$ is its reduced singular locus, and 
the morphism $q\colon \mathcal X\to \mathbb A^1$ is flat and lci. 
\end{prop}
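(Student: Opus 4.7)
The plan is to apply deformation to the normal cone to $\bar Y\subset \bar X$ and pinch fiberwise using the gluing construction of Section \ref{sec: glue}; the specialization result Proposition \ref{prop: special_glue} will identify the special fiber, while flat base change (Remark \ref{rem: flat-base-change}) will recover the general one.

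Let $\bar{\mathcal X}$ denote the deformation to the normal cone of $\bar Y\subset \bar X$, i.e., the complement in $\mathrm{Bl}_{\bar Y\times\{0\}}(\bar X\times \A^1)$ of the proper transform of $\bar X\times\{0\}$. Since $\bar X$ and $\bar Y$ are smooth, $\bar{\mathcal X}$ is a smooth quasi-projective variety equipped with a flat projection $\bar q\colon\bar{\mathcal X}\to \A^1$ whose general fiber is $\bar X$ and whose fiber at $0$ is $\bar X_0=V_{\bar Y}(N_{\bar Y|\bar X})$. The canonical closed embedding $\bar{\mathcal Y}:=\bar Y\times \A^1\hookrightarrow \bar{\mathcal X}$ is a smooth divisor, restricting to the given inclusion $\bar Y\subset \bar X$ on general fibers and to the zero section $\bar Y\subset\bar X_0$ on the special one; moreover there is a canonical trivialization $\bar{\mathcal X}\times_{\A^1} U \simeq \bar X\times U$ carrying $\bar{\mathcal Y}\times_{\A^1}U$ to $\bar Y\times U$. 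Set $\mathcal Y:=Y\times \A^1$ and $\tilde g:=g\times\mathrm{id}_{\A^1}\colon\bar{\mathcal Y}\to \mathcal Y$. Both $\bar{\mathcal X}$ and $\mathcal Y$ are quasi-projective, hence satisfy (AF), so Theorem \ref{thm: glue} produces the pushout $\mathcal X:=\bar{\mathcal X}\sqcup_{\bar{\mathcal Y}}\mathcal Y$; the compatible projections to $\A^1$ then induce a morphism $q\colon\mathcal X\to \A^1$.

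Because $\bar{\mathcal Y}$ and $\mathcal Y$ are flat over $\A^1$, Proposition \ref{prop: special_glue} identifies $q^{-1}(0)$ with the pushout of the fiber diagram at $0$, namely $\bar X_0\sqcup_{\bar Y}Y=X_0$. Flat base change (Remark \ref{rem: flat-base-change}) along $U\hookrightarrow\A^1$, combined with the trivialization of $\bar{\mathcal X}|_U$, yields $\mathcal X\times_{\A^1}U\simeq (\bar X\times U)\sqcup_{\bar Y\times U}(Y\times U)=X\times U$; this produces the three cartesian squares in the statement. Semi-smoothness of $\mathcal X$ then follows from Proposition \ref{prop: semi-smooth-glue} applied to the smooth data $(\bar{\mathcal X},\bar{\mathcal Y},\mathcal Y,\tilde g)$, and Remark \ref{rem: semi-smooth-glue} identifies its reduced singular locus with $\mathcal Y=Y\times\A^1$.

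Finally, since $\mathcal X$ is semi-smooth it is lci (Remark \ref{rem: demi-normal}), hence Cohen--Macaulay; all fibers of $q$ have dimension $\dim X$, so miracle flatness over the regular curve $\A^1$ yields flatness of $q$. To verify that $q$ is lci étale-locally, observe first that on $\bar{\mathcal X}\setminus \bar{\mathcal Y}$, where the pinching is trivial, $q$ restricts to a smooth morphism between smooth varieties. Near a point of $\bar{\mathcal Y}$, choose étale coordinates on $\bar X$ identifying it with $\bar Y\times\A^1_y$ and $\bar Y$ with $\{y=0\}$; the chart of $\bar{\mathcal X}$ containing $\bar{\mathcal Y}$ is then $\bar Y\times\A^1_u\times\A^1_t$ with $\bar{\mathcal Y}=\{u=0\}$. Étale-locally $\tilde g$ is either an isomorphism or $x\mapsto x^2$ on a single coordinate of $\bar Y$, and Example \ref{ex: pinch} shows that the pinch is étale-locally isomorphic either to $P_n\times\A^1_t$ or to $\{ab=0\}_{\A^3}\times\A^{n-1}\times\A^1_t$, with $q$ given by projection onto $\A^1_t$. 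Since these defining equations involve no $t$, this exhibits $q$ étale-locally as a regular closed immersion into a smooth morphism over $\A^1$. The technical heart of the argument is precisely this last step: despite the non-triviality of the deformation to the normal cone, one must observe that the pinched family has étale-local defining equations independent of $t$, which is what upgrades flatness of $q$ to the lci property.
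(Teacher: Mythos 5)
Your proposal follows essentially the same route as the paper: deformation to the normal cone of $\bar Y\subset\bar X$, fiberwise pinching via Theorem \ref{thm: glue}, identification of the special fiber by Proposition \ref{prop: special_glue} and of the restriction to $U$ by compatibility of the pushout with flat base change, semi-smoothness via Proposition \ref{prop: semi-smooth-glue}, and the observation that étale-locally $\mathcal X$ is a product of a semi-smooth variety with $\A^1_t$ with $q$ the projection, giving the lci property. The only (immaterial) differences are that you deduce flatness by miracle flatness where the paper notes that every component of the reduced scheme $\mathcal X$ dominates the smooth curve $\A^1$, and a harmless slip in the dimension count of the double-crossing local model.
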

\begin{proof}
Let $\bar{\mathcal X}$ be the degeneration to the normal cone (in fact, bundle) of the embedding $\bar Y \to \bar X$; it is a nonsingular variety with a smooth morphism $\bar q\colon\bar{\mathcal X}\to \mathbb A^1$ and a natural closed embedding
 of $\bar{\mathcal Y}:=\bar Y\times \mathbb A^1$  "into  $\bar {\mathcal X}$   such that the normal bundle $N_{\bar{\mathcal Y}/\bar{\mathcal X}}$ is the pullback from $\bar Y$ of $N_{\bar Y/\bar X}$. This can be easily checked by hand, since $\bar{\mathcal X}$ is the blow up of $\bar X\times \mathbb A^1$ along $\bar Y\times \{0\}$ minus the strict transform of $\bar X\times \{0\}$, which is isomorphic to $\bar X$; details can be checked in \cite[\S 5.1]{Fulton}, in particular Example 5.1.2.

We define $\mathcal X$ to be the variety obtained by pinching $\bar{\mathcal X}$ along $\bar Y\times \mathbb A^1$ via
 $(g,\Id_{\mathbb A^1})\colon \bar Y\times\mathbb A^1\to Y\times \mathbb A^1$. By Prop \ref{prop: semi-smooth-glue}, the variety $\bar{\mathcal X}$ is semi-smooth, and the
  conductor ideal defines a closed embedding $Y\times \mathbb A^1\to \mathcal X$.
 The map $q\colon \mathcal X\to \mathbb A^1$ is induced by the projection $\bar{\mathcal X}\to \bar X\times \mathbb A^1\to \mathbb A^1$ and by 
 the pushout property; it is flat since every component of $\mathcal X$ dominates $\mathbb A^1$.

The cartesian diagrams on the right hand side of the diagram \ref{eq: diag3}  follow immediately because the pushout construction commutes with product with $U$. The cartesian diagram on the left follows from Prop \ref{prop: special_glue}.

Away from $Y\times \mathbb A^1$, $q$ is smooth because $\bar q$ is. The morphism $\bar q\colon\bar{\mathcal X}\to \mathbb A^1$ is smooth, and so are the projections of $Y\times \mathbb A^1$ and $\bar Y\times \mathbb A^1$ to $\mathbb A^1$. Hence in the argument in \S \ref{ssec: A1} we can always assume that one of the local coordinates is the parameter $t$ of $\mathbb A^1$; thus, \'etale locally, $\mathcal X$ is the product of a semi-smooth variety 
with $\mathbb A^1$ and $q$ is the projection, which is locally hypersurface, hence lci.

\end{proof}

\begin{cor}\label{cor: same bundle} There is an isomorphism of invertible sheaves on $Y$ between $\mathcal T^1_{X_0}|_Y$ and $\mathcal T^1_X|_Y$.
\end{cor}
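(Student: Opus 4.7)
The plan is to apply the specialization result Theorem \ref{thm: restriction} to the flat, lci, locally hypersurface morphism $q\colon \cX\to \A^1$ constructed in Proposition \ref{prop: degeneration}, base-changed along closed embeddings of distinct points of $\A^1$. For any closed point $t\in \A^1$, Theorem \ref{thm: restriction} provides a natural isomorphism $\cT^1_{\cX_t}\cong \cT^1_q|_{\cX_t}$ of sheaves on the fiber $\cX_t$. Specializing to $t=0$ yields $\cT^1_{X_0}|_Y\cong \cT^1_q|_{Y\times\{0\}}$, while specializing to any $t_0\in U$ and using the identification $\cX_{t_0}\cong X$ coming from the trivial family $X\times U\to U$ on the right-hand side of diagram \eqref{eq: diag3} yields $\cT^1_X|_Y\cong \cT^1_q|_{Y\times\{t_0\}}$. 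Thus the corollary reduces to showing that the restrictions of $\cT^1_q$ to the two fibers $Y\times\{0\}$ and $Y\times\{t_0\}$ of the projection $Y\times \A^1\to \A^1$ agree as sheaves on $Y$.

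I would then exhibit $\cL:=\cT^1_q|_{Y\times \A^1}$ as a line bundle on the smooth total space $Y\times \A^1$. Since $q$ is locally hypersurface, $\cT^1_q$ is an invertible sheaf on the closed subscheme $\cX_{q,{\rm sing}}\subseteq \cX$ by Definition \ref{def: SIng}. The underlying topological space of $\cX_{q,{\rm sing}}$ coincides with $Y\times \A^1$, so $I_{\cX_{q,{\rm sing}}}\subseteq \sqrt{I_{\cX_{q,{\rm sing}}}}=I_{Y\times \A^1}$, meaning that $Y\times \A^1$ embeds as a closed subscheme of $\cX_{q,{\rm sing}}$; pulling back the invertible sheaf $\cT^1_q$ along this embedding displays $\cL$ as a line bundle on the smooth variety $Y\times \A^1$.

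Since $Y$ is smooth by Proposition \ref{prop: semi-smooth-glue}, $\A^1$-homotopy invariance of the Picard group gives an isomorphism $\pi_Y^*\colon \Pic(Y)\xrightarrow{\sim}\Pic(Y\times \A^1)$, where $\pi_Y\colon Y\times \A^1\to Y$ is the projection. Consequently $\cL\cong \pi_Y^*\cM$ for some line bundle $\cM$ on $Y$, and the restriction of $\cL$ to any fiber $Y\times\{t\}$ is canonically identified with $\cM$ under $Y\times\{t\}\cong Y$. Combining this with the first paragraph yields the chain of isomorphisms $\cT^1_{X_0}|_Y\cong \cL|_{Y\times\{0\}}\cong \cM\cong \cL|_{Y\times\{t_0\}}\cong \cT^1_X|_Y$, as required.

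The point that most deserves care is the scheme-theoretic inclusion $Y\times \A^1\subseteq \cX_{q,{\rm sing}}$, which is what promotes $\cL$ from a mere coherent sheaf to an honest line bundle and so lets us invoke homotopy invariance of $\Pic$. Everything else is routine: the first paragraph is a direct application of Theorem \ref{thm: restriction}, and the third is the standard $\A^1$-invariance of $\Pic$ on a smooth base.
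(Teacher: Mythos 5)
Your proof is correct, and its first half coincides with the paper's: both apply Theorem \ref{thm: restriction} fiberwise to the family $q\colon\cX\to\A^1$ of Proposition \ref{prop: degeneration} and reduce to comparing the restrictions of $\cL:=\cT^1_q|_{Y\times\A^1}$ to the fibers over $0$ and over a point of $U$. You diverge in two places. First, to see that $\cL$ is invertible you restrict the line bundle $\cT^1_q$ on $\cX_{q,\rm sing}$ along the closed embedding $Y\times\A^1\hookrightarrow\cX_{q,\rm sing}$ (which exists because $Y\times\A^1$ is reduced with the same support); the paper instead asserts that a coherent sheaf on $Y\times\A^1$ whose restriction to every fiber is invertible is itself invertible. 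Both work, and yours is the more self-contained justification. Second, and more substantively, the paper concludes by viewing $\cL$ as a morphism $\A^1\to\Pic(Y)$, noting it is constant on $U$ because $\cX|_U\cong X\times U$, and invoking separatedness of the Picard variety; you instead use $\A^1$-homotopy invariance of $\Pic$ for the smooth variety $Y$ to write $\cL\cong\pi_Y^*\cM$, which makes all fiber restrictions isomorphic at once. Your route buys two things: it does not need the triviality of the family over $U$ at this stage, and it avoids appealing to a Picard \emph{variety} of $Y$, which is delicate when $Y$ is only quasi-projective rather than proper (the setting of the paper allows this). The cost is negligible, since smoothness of $Y$ is already guaranteed by Proposition \ref{prop: semi-smooth-glue}.
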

\begin{proof} By Theorem \ref{thm: restriction} we have canonical isomorphisms $\mathcal T^1_q|_{X_0}\cong \mathcal T^1_{X_0}$ and, for every $t\ne 0$, $\mathcal T^1_q|_{\mathcal X_t}\cong \mathcal T^1_{\mathcal X_t}$. Let $\cL_q$ be the restriction of $\mathcal T^1_q$ to $Y\times \mathbb A^1$; it is a coherent sheaf whose restriction to each fiber $Y\times \{t\}$ is an invertible sheaf, hence it is itself an invertible sheaf. Thus it defines a morphism from $\mathbb A^1$ to the Picard variety of $Y$, which is constant on $U$ since all fibers $\mathcal X_t$ for $t\ne 0$ are isomorphic to $X$. Since the Picard variety is separated, this morphism is constant.
\end{proof}

As in \S \ref{ssec: double-def}, we write  $g_*\OO_{\bar Y}=\OO_Y\oplus L\inv$ and we denote by $D$ the branch locus of $g$  (so, in particular, $2L\sim D$); 
recall (Remark \ref{rem: semi-smooth-glue}) that $D$ is the subset of pinch points of $X$.  We set $E:=g_*(N_{\bar Y|\bar X}\inv)$.

 \begin{proof}[Proof of Thm. \ref{thm: computing-T1}]
(i) By Prop \ref{prop: degeneration} we can construct a degeneration of $X$ to the semi-smooth variety $X_0$ obtained by pinching the total space of the normal bundle of $\bar Y$ in $\bar X$ along the zero section via the map $g\colon \bar Y\to Y$. By Corollary \ref{cor: same bundle} we have that $\mathcal T^1_X|_Y$ is isomorphic to  $\mathcal T^1_{X_0}|_Y$. 

By Theorem \ref{prop: normal-bundle}, we can construct a closed embedding of $X_0$  as a hypersurface in the total space $V_Y(E)=\Spec\sym(E)$. By Remark \ref{rem: T1-divisor}, we have that $\mathcal T^1_X|_Y$ is isomorphic to $\mathcal O_{V_Y(E)}(X_0)|_Y$, which again by Theorem \ref{prop: normal-bundle} gives the result.

(ii) follows immediately from (i) and Corollary \ref{cor: utile}.

(iii) By Corollary \ref{cor: sing_pullback} we have that $\mathcal X\sing\cap X_0=(X_0)\sing$, while $\mathcal X\sing\cap (X\times U)=X\sing\times U$
 follows from the definition. Since $\mathcal X$ is semi-smooth, it follows from Lemma \ref{lem: IXsing} that $I_{Y\times \mathbb A^1|\mathcal X}$ is a line bundle on $D\times \mathbb A^1$;
 for every $t\in \mathbb A^1$, its restriction to $D\times \{t\}$ surjects to $I_{Y|(\mathcal X_t) \sing}$; 
 since both are line bundles, the restriction is an isomorphism. It follows, by separatedness of the Picard variety of $D$, that $I_{Y|X\sing}$ is isomorphic
  to $I_{Y|(X_0){\rm sing}}$ and we conclude by  Proposition \ref{prop: facile}.

 \end{proof}

  \appendix
  \section{Proof of Proposition \ref{prop: semi-smooth-glue}}\label{app: A}
  We use freely the notation of \S \ref{sec: glue}.
   \subsection{Proof of $(ii)\Rightarrow (i)$}\label{ssec: A1}
   As usual we denote by $f\colon \bar X\to X$ the gluing map and by $\iota$ the involution of $\bar Y$ associated with $g\colon \bar Y\to Y$. \par
  
  Given a point $P\in  X$ we are going to show that $P$ has an affine neighbourhood $U_P$ with an \'etale map  $\phi_P\colon U_P\to P_n=\Spec\K[u,v,w]/(u^2-v^2w)\times\A^{n-2}$.  To define $\phi_P$ we consider the preimage $\bar U_P\subset \bar X$ of $U_P$ and define  
    an \'etale   map $\bar\phi_P\colon \bar  U_P\to\A^n_{x,y, t_1,\dots t_{n-2}}$ such that  $\bar Y_P:=\bar Y\cap \bar U_P$ is mapped to ${\A^{n-1}_{x,t_1,\dots t_{n-2}}}$ and there is a commutative diagram:
    \begin{equation}\label{eq: double-diag}
 \begin{CD}
Y_P @<g<< {\bar Y_P} @>>>{\bar U_P}\\
@VVV @VVV @VV{\bar \phi_P}V\\
\A^{n-1}_{w,t_1,\dots t_{n-2}} @<h<< {\A^{n-1}_{x,t_1,\dots t_{n-2}}} @>>> {\A^n_{x,y,t_1,\dots t_{n-2}}}
\end{CD}
   \end{equation}
      where $Y_P:=Y\cap U_P$, the horizontal arrows to the right are inclusions and $h(x,t_1,\dots t_{n-2})=(x^2 , t_1,\dots t_{n-2})$. By the universal property of  pushout schemes there is an induced map $\phi_P\colon U_P\to P_n$. Finally, by \cite[Lem.~ 44]{kollar-quotients} the map $\phi_P$ is \'etale if in diagram \eqref{eq: double-diag} the vertical maps are \'etale and both squares are cartesian.
      \bigskip
      
  We may of course assume that $P\in Y$; we have two cases according to whether $(a)$  $g\inv(P)$ is a single point $Q$,  or $(b)$  $g\inv(P)$ consists of two points $Q_1,Q_2$. Case $(a)$ will give a pinch point and case $(b)$ a double crossings point.
  
Since our arguments often involve passing to smaller affine neighbourhoods, we find it useful to note the following elementary result:
\begin{lem}\label{lem: open}
In the above setup,  if  $V\subset \bar X$ is an open subset containing $f\inv(P)$, then there exists an open affine  neighbourhood $U_P$ of $P\in X$ such that $f\inv(U_P)\subseteq V$.
\end{lem}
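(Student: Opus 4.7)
The plan is to use that the gluing map $f\colon \bar X\to X$ is finite, hence closed, so the image of the complement of $V$ avoids $P$; a standard application of property $(AF)$ on $X$ then supplies the desired affine neighbourhood.

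First I would observe that $f$ is finite by Theorem \ref{thm: glue}(b), and in particular it is a closed map. Let $Z:=\bar X\setminus V$, which is closed in $\bar X$. Then $f(Z)$ is closed in $X$. By hypothesis $f^{-1}(P)\subseteq V$, so $P\notin f(Z)$. Set $W:=X\setminus f(Z)$; this is an open neighbourhood of $P$ in $X$, and by construction any point of $\bar X$ lying over $W$ must lie in $V$, i.e.\ $f^{-1}(W)\subseteq V$.

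Next, since $X$ satisfies property $(AF)$ (being a variety over $\K$; cf.\ Remark \ref{rem: AF}), the single point $P$ is contained in some open affine subset $V_0\subseteq X$. Intersecting with $W$ gives an open neighbourhood $V_0\cap W$ of $P$ which is open in the affine scheme $V_0$. A standard basis argument (taking a principal open set of $V_0$ containing $P$ and contained in $V_0\cap W$) produces an affine open neighbourhood $U_P$ of $P$ with $U_P\subseteq W$. Then $f^{-1}(U_P)\subseteq f^{-1}(W)\subseteq V$, as required.

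The only real content is the closedness of $f$, which reduces the statement to a routine application of $(AF)$; there is no serious obstacle.
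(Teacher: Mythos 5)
Your proof is correct and follows essentially the same route as the paper's: set $Z:=\bar X\setminus V$, use that the finite map $f$ is closed so $X\setminus f(Z)$ is an open neighbourhood of $P$ pulling back into $V$, and then shrink to an affine open. The only difference is that you spell out the final shrinking step (which needs nothing more than that affine opens form a basis for the topology of a scheme, so invoking $(AF)$ is harmless but unnecessary), whereas the paper leaves it implicit.
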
  
\begin{proof} Set $Z:=\bar X\setminus V$; then $X\setminus f(Z)$ is an open neighbourhood of $P$, so it contains an affine open neighbourhood  $U_P$  of $P$ and we have $f\inv(U_P)\subseteq V$ by construction. 
\end{proof}
  Since the question is  local on $X$   we may assume  the following (cf.~Remark \ref{rem: glue-local}): 
 \begin{itemize}
\item[(1)] $\bar X$ is affine and equal to $\Spec \bar A$;
\item[(2)]  $\bar Y=\Spec \bar B$ where $\bar B=\bar A/I$ for  $I$  a principal ideal with generator $y$;
\item[(3)]  $Y=\Spec B$ and there is an anti-invariant element $\bar x\in \bar B$ such that $\bar B= B\oplus \bar x B$.
\end{itemize}
To see why condition (2) holds, first note that
 in case $(a)$ the  ideal $I$ is  principal in a  neighbourhood of $P$, since by assumption the variety $\bar X$ is smooth   and $\bar Y$ is  a divisor. 
  In case $(b)$ we may find $y_1,y_2\in I$ such that $y_i$ has nonzero differential at $Q_i$, $i=1,2$. So at least one among $y_1$, $y_2$ and $y_1+y_2$
   has nonzero differential at both $Q_1$ and $Q_2$ and therefore generates $I$ in an open set containing $Q_1$ and $Q_2$.
    In both cases by Lemma \ref{lem: open} we can then  shrink $X$ in such a way that condition (2) holds. 

We can assume that condition (3)  holds by the discussion of  section \ref{ssec: double-def}. 
 \smallskip

Consider case $(a)$ first:  the function $\bar x$ defines  the ramification divisor $R\subset \bar Y $ of  the double cover $g\colon \bar Y\to Y$, which is smooth since $\bar Y$ and $Y$ are smooth by assumption. So we can find $\bar t_1,\dots \bar t_{n-2}\in B\subset \bar B$ such that $\bar x, \bar t_1,\dots \bar t_{n-2}$ are local parameters on  $\bar Y$ near $Q$. Lifting all these elements to $\bar A$, we obtain local parameters $x,y, t_1,\dots t_{n-2}$ defining a map to $\A^n$ that is \'etale near $Q$. By Lemma \ref{lem: open}, we may find an open affine neighbourhood $U_P$ of $P$ such that  this map restricts to an \'etale map   $\phi_P\colon U_P\to \A^n$. It is immediate to check that diagram \eqref{eq: double-diag} is commutative, consists of two cartesian diagrams and the vertical arrows are \'etale.
 \smallskip

Next consider case $(b)$.  In this case $\bar x$ does not vanish at $Q_1$, $Q_2$. 
We claim that we may assume that $\bar x$ has nonzero differential at $Q_1$ and $Q_2$. Indeed, if this is not the case then the differential of $\bar x$ vanishes at both $Q_1$ and $Q_2$, because $\bar x$ is antiinvariant under the  the involution $\iota$ induced by $g$ and $\iota$ switches $Q_1$ and $Q_2$. So it is enough to multiply  $\bar x$ by a nonzero element $u\in B\subset \bar B$  that does not vanish and has nonzero  differential at $P$, and  possibly shrink $X$ again, so that  $u$ is a unit of $B$.  Finally we choose $\bar t_1,\dots \bar t_{n-2}$ in $B$ such that $\bar x^2, \bar t_1,\dots \bar t_{n-2}$ are local parameters on $Y$ at $Q$. It follows that $\bar x,\bar t_1,\dots \bar t_{n-2}$ are local parameters on $\bar Y$ at $Q_1$ and $Q_2$. One can now conclude the proof as in case $(a)$.


  \subsection{Proof of $(i)\Rightarrow (ii)$}\label{ssec: A2}
  
Let  $X$ be a   variety over  the  algebraically closed field $\K$ of characteristic $\ne 2$. 
  We 
  denote by $f\colon \bar X\to X$ the normalization morphism. 
  Consider the exact sequence:
  $$0\to \OO_X\to f_*\OO_{\bar X}\to \mathcal Q\to 0;$$
  the conductor $\mathcal I \subset \OO_X$ is the ideal sheaf  $\Ann(\mathcal Q)$ and  is the largest ideal sheaf  of $\OO_X$  of the form  $ f_*\bar {\mathcal I}$ for some ideal sheaf $\bar{\mathcal I}$ of $\OO_{\bar X}$. We denote by $Y$, $\bar Y$ the zero scheme of $\mathcal I$, $\bar{\mathcal I}$  respectively. 
  By definition,  $Y_{\red}$ is precisely the set of non-normal points of $X$.  
  
    \begin{lem}\label{lem: base-change}
   Let $U$, $V$ be $\K$-varieties, let $f_U\colon \bar U\to U$ and $f_V\colon \bar V\to V$ be the normalization maps and let $\mathcal I_U$, $\mathcal I_V$ be the conductors of $U$ and  $V$. If $\phi\colon U\to V$ is  an \'etale morphism, then:   
  \begin{enumerate}
  \item  the  following diagram is cartesian: 
  \begin{equation}\label{diag: cartesian}
  \begin{CD}  \bar U @>{f_U}>>U\\
 @V{\bar \phi}VV @V{\phi}VV\\
 \bar V @>{f_V}>> V
  \end{CD}
  \end{equation}
  \item  $\phi^*\mathcal  I_V=\mathcal I_U$. 
  \end{enumerate}
  \end{lem}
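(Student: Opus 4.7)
\textbf{Proof proposal for Lemma \ref{lem: base-change}.} The plan for (i) is to realize the normalization of $U$ as the fibre product $W := U\times_V \bar V$, with projections $f'\colon W\to U$ and $\bar\phi\colon W\to\bar V$. Since the normalization is characterized (up to unique isomorphism over the base) as the unique finite birational morphism from a normal scheme, it will suffice to verify these three properties for $f'$. Finiteness of $f'$ is immediate by base change of the finite morphism $f_V$. Normality of $W$ follows because $\bar\phi$ is étale (base change of $\phi$) and $\bar V$ is normal: étale extensions of local rings are faithfully flat with geometrically regular (in fact separable field) fibres, so normality is preserved upward. For birationality, $f_V$ restricts to an isomorphism over the normal locus $V^{\mathrm n}\subseteq V$; base changing, $f'$ is an isomorphism over $\phi^{-1}(V^{\mathrm n})$, and this open set is dense in $U$ since $\phi$ is étale (hence equidimensional) and $V^{\mathrm n}$ is dense in the reduced variety $V$. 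The universal property of the normalization then produces a canonical $U$-isomorphism $W\cong \bar U$, which is precisely the assertion that \eqref{diag: cartesian} is cartesian.

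For (ii), the plan is to apply the flat functor $\phi^*$ to the defining short exact sequence
\[
0\to \OO_V\to f_{V*}\OO_{\bar V}\to \mathcal Q_V\to 0.
\]
Flatness of $\phi$ keeps the sequence exact, and by (i) together with flat base change one has $\phi^* f_{V*}\OO_{\bar V}\cong f_{U*}\bar\phi^*\OO_{\bar V}=f_{U*}\OO_{\bar U}$, so comparing with the analogous sequence on $U$ yields $\phi^*\mathcal Q_V\cong \mathcal Q_U$. Since $\mathcal Q_V$ is a coherent (hence finitely presented) $\OO_V$-module and $\phi$ is flat, formation of the annihilator commutes with $\phi^*$; therefore
\[
\phi^*\mathcal I_V=\phi^*\Ann_{\OO_V}(\mathcal Q_V)=\Ann_{\OO_U}(\phi^*\mathcal Q_V)=\Ann_{\OO_U}(\mathcal Q_U)=\mathcal I_U.
\]

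The only mildly technical point is the claim that étale extensions preserve normality, which underpins both parts. This is standard (Stacks, Tag 033G): normality is a local property in the étale topology. I do not expect any real obstacle; the argument is a clean application of flat base change once normalization is characterized by its universal property.
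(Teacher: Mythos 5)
Your proof is correct and follows essentially the same route as the paper's: part (i) is the paper's argument verbatim in substance (form the fibre product, observe it is normal because it is \'etale over the normal $\bar V$, finite and birational over $U$, and invoke uniqueness of the normalization), and part (ii) likewise rests on flat base change for ${f_V}_*\OO_{\bar V}$. The only divergence is the final step of (ii): the paper verifies that the induced map $\phi^*\mathcal I_V\to\mathcal I_U$ is an isomorphism by localizing and passing to completions, whereas you deduce it from exactness of $\phi^*$ on the conductor sequence together with the fact that annihilators of coherent modules commute with flat base change --- both justifications are valid.
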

  \begin{proof} (i) Consider the cartesian diagram:
  \[
  \begin{CD}  U' @>>>U\\
 @VVV @V{\phi}VV\\
 \bar V @>{f_V}>> V
  \end{CD}
  \]
  The morphism $U'\to \bar V$ is \'etale, so $U'$ is normal, since $\bar V$ is (\cite[\href{https://stacks.math.columbia.edu/tag/025P}{Tag 025P}]{stacks}). The morphism $U'\to U$ is finite and birational, since $f_V$ is, so there is a unique  isomorphism $U'\cong \bar U$ over $U$, and  via this identification the diagram above coincides with \eqref{diag: cartesian}. 
  \medskip
  
  (ii) Since cohomology commutes with flat base extension (\cite[Prop.~III.9.3]{Hartshorne}), the cartesian diagram \eqref{diag: cartesian}  gives  a natural isomorphism $\phi^*{f_V}_*\OO_{\bar V}\to {f_U}_*\bar \phi^*\OO_{\bar V}={f_U}_*\OO_{\bar U}$. So  we have a natural map $\phi^*\mathcal I_V\to \mathcal \mathcal I_U$; the fact that this map is an isomorphism can be checked by localizing and passing to completions, so it follows from the fact that the \'etale map $\phi$ induces an isomorphism on the completions of the local rings.
 
  \end{proof}

  \begin{lem}\label{lem: Xbar-smooth}
   If $X$ is a semi-smooth $
   \K$-variety, then:
  \begin{enumerate}
  \item  $\bar X$ is smooth;
  \item  $Y\subset X $ and $\bar Y\subset \bar X$ are smooth divisors;
  \item $f$ induces a finite degree 2 map $g\colon \bar Y\to Y$;
  \item $X$ is seminormal.
  \end{enumerate}
  \end{lem}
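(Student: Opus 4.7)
The plan is to reduce all four assertions to the explicit local model $P_n$ and then carry out a direct calculation.

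First, every claim in the lemma is \'etale local on $X$, and by Lemma \ref{lem: base-change} the normalization morphism together with the conductor ideal (and hence the subschemes $Y\subset X$ and $\bar Y\subset \bar X$) commute with \'etale base change. Combined with the definition of semi-smooth, it therefore suffices to verify all four assertions in the case $X=P_n=\Spec R\times \A^{n-2}$, where $R:=\K[u,v,w]/(u^2-v^2w)$.

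Next, I would make the normalization explicit as in Remark \ref{rem: pinch} by writing down the ring homomorphism $R\to \bar R:=\K[x,y]$ with $u\mapsto xy$, $v\mapsto y$, $w\mapsto x^2$ (extended by the identity on the $t_i$). This is a finite birational extension and $\bar R$ is a polynomial ring, hence regular, proving (i). A direct monomial-by-monomial check (using $x^{2k}=w^k\in R$, $x^{2k+1}y=uw^k\in R$, $x^{2k}y=vw^k\in R$) identifies $R$ with the subring $\{\,p\in \bar R : p|_{y=0}\in \K[x^2,t]\,\}$ and shows that the largest ideal of $R$ that is also an ideal of $\bar R$ is $(u,v)=y\bar R$. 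Thus the conductor is $\mathcal I=(u,v)$ and $\mathcal I\cdot \OO_{\bar X}=(y)$, so $Y\cong \A^{n-1}_{w,t}$ and $\bar Y\cong \A^{n-1}_{x,t}$ are smooth divisors, giving (ii); the induced map $g\colon \bar Y\to Y$ is $(x,t)\mapsto (x^2,t)$, which is finite of degree $2$, giving (iii).

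For (iv), I would invoke the characterization of the seminormalization of $R$ inside the normalization $\bar R$ as the subring of elements $a\in \bar R$ whose values agree on every fiber of $f\colon \Spec \bar R\to \Spec R$. For $P_n$ the only nontrivial fibers consist of the two points $(\pm x,0,t)$ lying over a double-crossings point, so the seminormalization is exactly $\{\,p\in \bar R : p(x,0,t)=p(-x,0,t)\,\}=\{\,p\in \bar R : p|_{y=0}\in \K[x^2,t]\,\}$, which coincides with $R$ by the previous paragraph. Hence $R$ is seminormal, and since seminormality is an \'etale-local property this gives (iv).

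The main technical step is the module-theoretic identification $R=\{p\in \bar R : p|_{y=0}\in \K[x^2,t]\}$, which simultaneously powers the computations of the conductor, of the subschemes $Y$ and $\bar Y$ and of the seminormalization. Once this identification is in place, all four conclusions of the lemma are immediate consequences together with an \'etale-local spreading-out argument.
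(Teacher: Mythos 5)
Your proposal is correct. For parts (i)--(iii) it follows exactly the paper's route: the paper likewise reduces via Lemma \ref{lem: base-change} to the local model $P_n$ and then simply asserts that the claims ``are easily seen to hold'' there; you have supplied the explicit computation (the presentation $\bar R=R\cdot 1+R\cdot x$, the identification $R=\{p\in\bar R: p|_{y=0}\in\K[x^2,t]\}$, and the conductor $(u,v)=y\bar R$), which is precisely the verification the paper leaves to the reader. The genuine difference is in (iv): the paper deduces seminormality from part (ii) by citing Koll\'ar's criterion \cite[Prop.~I.7.2.5]{kollar-rational}, which infers seminormality from the structure of the conductor subschemes, whereas you compute the seminormalization of $R$ inside $\bar R$ directly via the ``functions constant on the fibers of the normalization'' description and observe that it returns $R$ itself. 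Your route is more self-contained and makes transparent \emph{why} the pinch point and double-crossing point are seminormal, at the cost of invoking the fiberwise description of the seminormalization (Traverso's gluing); note that, strictly speaking, that description is a condition at all points of $\Spec R$, not only closed ones, but for your argument you only need the inclusion of the seminormalization into the subring of functions agreeing on closed fibers, which is immediate, and over an algebraically closed field the closed-point conditions here already cut out exactly $R$. So there is no gap, only a point where the citation should be made precise.
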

  \begin{proof} 
  Claims (i),(ii), (iii)  are local in the \'etale topology by  Lemma \ref{lem: base-change} and 
  are easily seen to hold  for $P_n= \Spec\K[u,v,w]/(u^2-v^2w)\times\A^{n-2}$.
  
Claim (iv) follows from  \cite[Prop.~I.7.2.5]{kollar-rational}  because of (ii).
    \end{proof}

\begin{proof}[Conclusion of proof of $(i)\Rightarrow (ii)$]
Let $f\colon \bar X\to X$ be the normalization and let $\bar Y\subset \bar X$ and $Y\subset X$ be the subschemes defined by the conductor ideal. By Lemma \ref{lem: Xbar-smooth}, $\bar X$ is smooth and $\bar Y$ and $Y$ are smooth of codimension 1. In addition, $\bar X$ and $Y$ satisfy condition (AF), since $X$ does, so we can consider the pushout scheme $X':=\bar X\sqcup_{\bar Y}Y$. By the universal property of the  pushout, there is  a birational morphism  $X'\to X$, which is the weak normalization of $X$ (cf.~\cite[Example 5]{kollar-quotients}). Since $X$ is semi-normal by Lemma \ref{lem: Xbar-smooth} and $X'$ is also semi-normal  (\cite[Prop.~7.2.3)]{kollar-rational}, the map $X'\to X$ is an isomorphism. 
 \end{proof}

  \vspace{.5cm}
  \hfill\break
Barbara Fantechi \\
SISSA  \\
 Via Bonomea 265, I-34136 Trieste (Italy)\\
{\tt fantechi@sissa.it}

  \vspace{.2cm}
   \hfill\break
Marco Franciosi\\
Dipartimento di Matematica, Universit\`a di Pisa\\
Largo B.~Pontecorvo 5,  I-56127 Pisa (Italy)\\
{\tt marco.franciosi@unipi.it}

\vspace{.2cm}

 \hfill\break
Rita Pardini\\
Dipartimento di Matematica, Universit\`a di Pisa\\
Largo B.~Pontecorvo 5,  I-56127 Pisa (Italy)\\
{\tt rita.pardini@unipi.it}

  \end{document}